\let\@fnsymbol\@arabic
\newtheorem{definition}{Definition}
\newtheorem{theorem}{Theorem}
\newtheorem{lemma}[theorem]{Lemma}
\newcommand{\infec} {A}
\newcommand{\checked} {Z}
\newcommand{\filter}[1]{\mathcal{F}(#1)}
\newcommand{\error}{\delta}
\newcommand{\indic}{Q}
\newcommand{\tendstoinfty}{\alpha}
\begin{document}

\title{A simple proof of almost percolation on $G(n,p)$\footnote{An earlier version of this paper appeared in the proceedings of the 27th International Conference on Probabilistic, Combinatorial and Asymptotic Methods for the Analysis of Algorithms (AofA 2016).}}

\author{Mihyun ~Kang\thanks{Graz University of Technology, Institute of Discrete Mathematics, Steyrergasse 30, Graz, Austria.
\newline \{kang, makai\}@math.tugraz.at. Supported by Austrian Science Fund (FWF): P26826.} \and 
Tam\'as ~Makai\footnotemark[2]\\
}

\maketitle

\abstract{We consider bootstrap percolation on the binomial random graph $G(n,p)$ with infection threshold $r\in \mathbb{N}$, an infection process which starts from a set of initially infected vertices and in each step every vertex with at least $r$ infected neighbours becomes infected. We improve the results of Janson, \L uczak, Turova, and Valier (2012) by strengthening the probability bounds on the number of infected vertices at the end of the process, using simple arguments based on martingales and giant components.
}

\section{Introduction}

Bootstrap percolation on a graph with infection threshold $r\in \mathbb{N}$ is a deterministic infection process which evolves in rounds. In each round every vertex has exactly one of two possible states: it is either infected or uninfected. We denote the set of initially infected vertices by $\infec(0)$. In each round of the process every uninfected vertex $v$ becomes infected if it has at least $r$ infected neighbours, otherwise it remains uninfected. Once a vertex has become infected, it remains infected forever. The final infected set is denoted by $\infec_f$.

Bootstrap percolation was introduced by Chalupa, Leath, and Reich \cite{bootstrapintr} in the context of
magnetic disordered systems. Since then bootstrap percolation processes and extensions have been used to describe several complex phenomena: from neuronal activity \cite{MR2728841,inhbootstrap} to the dynamics of the Ising model at zero temperature \cite{Fontes02stretchedexponential}.

The dependence of the final number of infected vertices on the set of initially infected vertices have been studied for a variety graphs. These include various deterministic graphs such as trees \cite{MR2248323,MR2430783}, grids \cite{MR968311,MR2888224,MR2546747,MR1921442,MR1961342}, and hypercubes \cite{MR2214907}. Several random graph models have also been considered including the binomial random graph \cite{arXiv:1602.01751,MR3025687,V07}, random graphs with a given degree sequence \cite{MR2595485}, random regular graphs \cite{MR2283230}, the Chung-Lu model \cite{bootpower,arXiv:1402.2815}, preferential attachment graphs \cite{arXiv:1404.4070}, and random geometric graphs on the hyperbolic plane \cite{MR3426518}.

This paper focuses on bootstrap percolation on the binomial random graph $G(n,p)$, a graph with vertex set $[n]:=\{1,\ldots, n\}$ in which every edge appears independently with probability $p=p(n)$. Extending the results of Vallier \cite{V07}, Janson,  \L uczak, Turova, and Vallier \cite{MR3025687} analysed bootstrap percolation on $G(n,p)$, where the set of initially infected vertices $\infec(0)$ is chosen uniformly at random from the vertex sets of size $a\in [n]$. For $r\geq 2$ and $p$ satisfying both $p=\omega(n^{-1})$ and $p= o(n^{-1/r})$, they showed, among other results, that with probability tending to one as $n\rightarrow \infty$ either only a few additional vertices are infected or almost every vertex becomes infected, depending on the number of initially infected vertices. In addition they determined 
the probability of both of these events up to an additive term tending to zero as $n\rightarrow \infty$.

They also considered the complementary regime of $p$. When $p=O(n^{-1})$, with probability tending to one as $n\rightarrow \infty$, due to the large number of small components, the only way infection can spread to almost every vertex is if almost every vertex was infected initially. In the $p=\Theta(n^{-1/r})$ case, with probability tending to one as $n\rightarrow \infty$ every vertex will become infected if the initial set of infected vertices  tends to infinity. In addition when  $p=\omega(n^{-1/r})$ already $r$ initially infected vertices spread the infection to every vertex in the graph with probability tending to one as $n\rightarrow \infty$. Therefore the most interesting range of $p$ is when $p=\omega(n^{-1})$ and $p= o(n^{-1/r})$ and in this paper we will concentrate on this range.

The main contribution of this paper is a simple proof of almost percolation using martingales and the  giant component.
We introduce a martingale in order to determine the number of infected vertices during the early stages of the process. 
Furthermore in the supercritical regime we show that the subgraph spanned by the vertices with $r-1$ infected neighbours grows large enough to contain a giant component. The infection of just one vertex in this giant component leads to every vertex in the component becoming infected and we show that this in fact happens with exponentially high probability. 

A weaker version of the martingale argument was already used in \cite{MR3025687}. However we are unaware of any other proof relying on the appearance of the giant component, making our proof simple and original.

\paragraph{Main Results.}
Throughout the paper we assume that $r\geq 2$ and that both $p=\omega(n^{-1})$ and \linebreak[4] 
$p=o ( n^{-1/r})$ hold. Moreover, any unspecified limits and asymptotics will be as $n\rightarrow \infty$. Throughout the paper we assume that $n$ is sufficiently large. Let 
$$\error:=\max\left\{(np^{r})^{1/(2(r-1))},(np)^{-1/(4(r-1))}\right\}$$
and note that due to our conditions on $p$ we have $\error=o(1)$. Set 
$$t_0:=\left((1+\error)\frac{(r-1)!}{np^r}\right)^{1/(r-1)}.$$
Let $\hat{\pi}(t):=\mathbb{P}[\mathrm{Bin}(t,p)\geq r]$ and define
$$a_c:=-\min_{t\leq t_0} \frac{n\hat{\pi}(t)-t}{1-\hat{\pi}(t)}$$
(which is defined essentially as $a_c^*$ in \cite{MR3025687}, except we replace $\mathrm{Po}(tp)$ with $\mathrm{Bin}(t,p)$). 
In addition denote by $t_c$ the smallest value $t$ where this minimum is reached. Similarly to Lemma 9.5 in \cite{MR3025687} one can show that
$$t_c=(1+o(1))\left(\frac{(r-1)!}{np^r}\right)^{1/(r-1)}$$ 
and 
$$a_c=(1+o(1))\left(1-\frac{1}{r}\right)t_c.$$ 

We begin with the size of the final infected set in the subcritical case, in which the number of the initially infected vertices is smaller than the critical value $a_c$.
\begin{theorem}\label{mainsub}
Let $\tendstoinfty:=\tendstoinfty(n)$ be any function satisfying the conditions $\tendstoinfty=\omega(\sqrt{a_c})$ and $\tendstoinfty\leq a_c-r$. \linebreak[4] 
If $|\infec(0)|=a_c-\tendstoinfty$, then with probability at least
	$$1-\exp\left(-(1+o(1))\frac{r \tendstoinfty^2}{2 (t_0+r\tendstoinfty/3)}\right)$$
	we have $|\infec_f|< t_c$.
\end{theorem}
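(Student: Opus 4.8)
The plan is to track the number of infected vertices round by round during the early phase of the process, and show that with the stated probability the infection dies out before reaching $t_c$ vertices. Write $A(s)$ for the infected set after $s$ rounds, starting from $|A(0)|=a_c-\tendstoinfty$, and let $t(s)=|A(s)|$. The key quantity is the \emph{indicator} of a currently-uninfected vertex: for a vertex $v\notin A(s)$, the probability it has at least $r$ neighbours in $A(s)$ is $\hat\pi(t(s))=\mathbb{P}[\mathrm{Bin}(t(s),p)\ge r]$, computed over the as-yet-unexposed edges between $v$ and $A(s)$. I would first argue that, as long as $t(s)\le t_0$, the process admits a coupling with a sum of roughly $n$ independent Bernoulli$(\hat\pi(t(s)))$ trials determining which vertices get newly infected, so that $t(s+1)\approx |A(0)| + n\hat\pi(t(s))$ once we account for the fact that vertices already in $A(s)$ stay infected and contribute no fresh randomness. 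Expanding this recursion and comparing with the function $t\mapsto (n\hat\pi(t)-t)/(1-\hat\pi(t))$ shows that the "drift" of $t(s)$ toward or away from the fixed point is governed precisely by how $|A(0)|=a_c-\tendstoinfty$ compares to $a_c=-\min_{t\le t_0}(n\hat\pi(t)-t)/(1-\hat\pi(t))$: since $|A(0)|<a_c$, there is a uniform gap $\gtrsim \tendstoinfty$ between the curve $n\hat\pi(t)$ and the diagonal $t-|A(0)|$ (rescaled by $1-\hat\pi(t)$) on the whole interval $t\le t_0$, so the infected set cannot climb past $t_c$.

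To make the "cannot climb past $t_c$" statement quantitative with the claimed exponential bound, I would set up a martingale (or supermartingale) adapted to the filtration generated by the edges exposed in each round, of the form $M(s) = t(s) - (\text{expected increment})$, or more conveniently track the total number of "infection tests" that come up positive. Because $\hat\pi(t)$ is increasing and for $t\le t_0$ we have $\hat\pi(t)=\Theta((tp)^r/r!)\cdot(1+o(1))$, the increments are small (order $n\hat\pi(t_0)=O(t_0)$ in total) and, crucially, the total number of positive tests before time $t_c$ is a sum of indicators that is stochastically dominated by a binomial with mean bounded away from $t_c-|A(0)|$ by $\Theta(\tendstoinfty)$. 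A Bernstein/Freedman-type inequality for (super)martingales with increments bounded by $r$ and predictable quadratic variation of order $t_0$ then yields a tail bound of the shape $\exp(-\Theta(\tendstoinfty^2/(t_0+r\tendstoinfty)))$; chasing the constants — the factor $r$ comes from each newly infected vertex "using up" $r$ of the positive tests, and the $t_0 + r\tendstoinfty/3$ in the denominator is exactly the Bernstein denominator $\sigma^2 + Mx/3$ with $\sigma^2\approx t_0$ and $M\approx r$ — gives precisely the exponent $(1+o(1))\, r\tendstoinfty^2/(2(t_0+r\tendstoinfty/3))$.

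There are two routine technical points to dispatch. First, the coupling between the true (correlated) bootstrap process and the independent-Bernoulli model is not exact, because once a vertex is infected its edges to later-infected vertices are no longer "fresh"; the standard fix, as in \cite{MR3025687}, is to expose edges only between $A(s)\setminus A(s-1)$ and the rest and to note that re-exposing already-counted edges only helps the infection, so one works with an upper coupling throughout and the error terms are absorbed into the $\error$ already built into $t_0$. Second, one must confirm $t_0\ge t_c$ (so that the interval $t\le t_0$ genuinely contains the relevant range), which follows from the stated asymptotics $t_c=(1+o(1))((r-1)!/np^r)^{1/(r-1)}$ against the definition of $t_0$ with its $(1+\error)$ slack; the whole argument is then run on the stopping time $\tau=\min\{s: t(s)\ge t_c \text{ or } t(s)>t_0\}$ and one shows $\mathbb{P}[\tau<\infty \text{ via the first alternative}]$ is bounded by the displayed quantity.

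I expect the main obstacle to be the last-mentioned bookkeeping: getting the \emph{constant} in the exponent exactly right (not just up to a constant factor) requires carefully matching the variance proxy of the martingale to $t_0$ and the increment bound to $r$ on the nose, and showing that the contributions from the region where $\hat\pi(t)$ is largest (near $t=t_0$) dominate so that the "$\min$" in the definition of $a_c$ is what controls the bound. In particular one must verify that the worst-case round, where the gap between $n\hat\pi(t)$ and the diagonal is smallest, is where $t=t_c$, and that the deviation needed to cross there is $(1-\hat\pi(t_c))^{-1}\tendstoinfty = (1+o(1))\tendstoinfty$ since $\hat\pi(t_c)=o(1)$ — this is what turns the abstract Bernstein bound into the stated formula.
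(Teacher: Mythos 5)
Your overall strategy is the right one and matches the paper's in spirit: a martingale for the running count of infected vertices, a Bernstein--Freedman bound with explicit variance proxy and increment bound, and a comparison of $|\infec(0)|=a_c-\tendstoinfty$ with the curve $t\mapsto (t-n\hat{\pi}(t))/(1-\hat{\pi}(t))$ at its minimiser $t_c$. The paper's execution is simpler than what you sketch: in the Scalia-Tomba reformulation one examined vertex is processed per step, so $|\checked(t)|=t$ for all $t\le T$ and hence $|\infec(t_c)|<t_c$ at the \emph{single deterministic} step $t_c$ already forces $T<t_c$ and $|\infec_f|<t_c$. No stopping time $\tau$, no round-by-round drift analysis, and no coupling correction are needed; the indicators $X(t_c,i)$ are genuinely independent given $\checked(t_c)$ because they depend on disjoint edge sets, and the only wrinkle (the process possibly stopping before $t_c$) is absorbed into the definition of $\pi(t)$ versus $\hat{\pi}(t)$.

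The genuine gap is in your constant-chasing, which you yourself flag as the main obstacle. You take the Bernstein denominator to be $\sigma^2+M\lambda/3$ with $\sigma^2\approx t_0$ and increment bound $M\approx r$, and attribute the factor $r$ in the numerator to each vertex ``using up $r$ positive tests.'' With $\lambda=(1+o(1))\tendstoinfty$ this yields $\exp(-(1+o(1))\tendstoinfty^2/(2(t_0+r\tendstoinfty/3)))$, which is weaker than the stated bound by a factor of $r$ in the exponent, and the ``$r$ tests per vertex'' heuristic does not repair this. The correct accounting is the opposite on both counts: the one-step difference of the (normalised) martingale is $1/(1-\hat{\pi}(t))=1+o(1)$, not $r$, and the total conditional variance is at most $n\hat{\pi}(t_c)/(1-\hat{\pi}(t_c))^3\le(1+o(1))\,n\hat{\pi}(t_0)$, where the computation $\hat{\pi}(t_0)=(1+o(1))t_0^rp^r/r!=(1+o(1))t_0/(rn)$ gives $n\hat{\pi}(t_0)=(1+o(1))t_0/r$. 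It is this identity --- the variance proxy being $t_0/r$ rather than $t_0$ --- that produces $\tendstoinfty^2/(2(t_0/r+\tendstoinfty/3))=r\tendstoinfty^2/(2(t_0+r\tendstoinfty/3))$. Without it your argument proves the theorem only with a weaker constant, which matters here since the sharp exponent is the point of the result.
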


Following is our main result, which says that in the supercritical case, i.e.\ when the number of initially infected vertices is larger than $a_c$, almost every vertex becomes infected.
\begin{theorem}\label{mainsup}
	
	Let $\tendstoinfty:=\tendstoinfty(n)$ be any function satisfying the conditions $\tendstoinfty=\omega(\sqrt{a_c})$ and $\tendstoinfty\leq t_0-a_c$.\linebreak[4] 
If $|\infec(0)|=a_c+\tendstoinfty$, then with probability at least
$$1-\exp\left(-(1+o(1))\frac{r\tendstoinfty^2}{8(t_0+r\tendstoinfty/3)}\right)-\exp\left(-(1+o(1))\frac{(r-1)\tendstoinfty^2}{8(t_0+(r-1)\tendstoinfty/2)}\right)$$
	we have $|\infec_f|= (1+o(1))n$.
\end{theorem}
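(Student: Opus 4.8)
The plan is to reveal $G(n,p)$ through an exploration of the infection. Maintain a queue of infected, not-yet-processed vertices, initially $\infec(0)$; at step $t$ remove one vertex, declare it \emph{used}, expose all of its as-yet-unexposed edges, and add to the queue every hitherto uninfected vertex that thereby acquires its $r$-th used (hence infected) neighbour. Writing $S_t$ for the number of infected vertices once $t$ vertices have been used and $Y_t:=S_t-t$ for the number of queued vertices, the process runs exactly while $Y_t>0$; and a vertex is infected after step $t$ precisely when it has at least $r$ neighbours among the first $t$ used vertices, so conditioning along the natural filtration shows that $S_t$ concentrates around $|\infec(0)|+(n-|\infec(0)|)\hat\pi(t)$, whence $\mathbb{E}[Y_t]$ is essentially $|\infec(0)|+(n-|\infec(0)|)\hat\pi(t)-t$. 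With $|\infec(0)|=a_c+\tendstoinfty$, and using that the identity $a_c=\max_{t\le t_0}\frac{t-n\hat\pi(t)}{1-\hat\pi(t)}$ rearranges to $a_c+(n-a_c)\hat\pi(t)-t\ge 0$ for all $t\le t_0$, we get $\mathbb{E}[Y_t]\gtrsim\tendstoinfty\bigl(1-\hat\pi(t)\bigr)\ge(1-o(1))\tendstoinfty$ throughout $[0,t_0]$ (note $\hat\pi(t_0)=o(1)$).

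\textbf{Martingale phase.} I would regard $(Y_t)_{t\le t_0}$ as a submartingale whose increments $Y_{t+1}-Y_t=-1+\mathrm{Bin}(\,\cdot\,,p)$ have light (geometric-type) tails and whose predictable quadratic variation over $[0,t_0]$ is $O(t_0)$, and apply a Freedman/Bernstein-type martingale inequality to obtain a bound of the shape
$$\mathbb{P}\bigl[\exists\,t\le t_0:\ Y_t<\tfrac{\tendstoinfty}{2}\bigr]\ \le\ \exp\!\left(-(1+o(1))\frac{r\tendstoinfty^2}{8(t_0+r\tendstoinfty/3)}\right);$$
it is the margin $\tendstoinfty/2$, rather than merely $Y_t>0$, that accounts for the factor $\tfrac18$. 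Off this event the process reaches at least $t_0$ infected vertices with at least $\tendstoinfty/2$ still queued, and no edge from these queued vertices to the uninfected vertices has yet been exposed. Running in parallel the analogous martingale for the number of uninfected vertices with exactly $r-1$ used neighbours — the threshold-$(r-1)$ quantity responsible for the second error term, with $r-1$ replacing $r$ in the exponent — shows that, with the stated exceptional probability, the set $\ninfec$ of such vertices at time $t_0$ satisfies $|\ninfec|\,p\ge 1+\Omega(\error)$; the slack $(1+\error)$ deliberately built into the definition of $t_0$ is exactly what pushes $|\ninfec|\,p$ above the critical value $1$.

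\textbf{Giant component and runaway.} No edge inside $\ninfec$ has been exposed, so $G[\ninfec]$ is distributed as $G(|\ninfec|,p)$ with $|\ninfec|\,p\ge 1+\Omega(\error)>1$; by the classical phase-transition results for the binomial random graph it has, with failure probability exponentially small in $|\ninfec|$ (hence negligible against the two terms above), a component $C$ with $|C|=\Theta(\error/p)$, which is $\omega(t_c)$ because $\error\gg t_c p$. Each of the $\ge\tendstoinfty/2$ queued vertices sends an edge into $C$ independently with probability $p$, so the probability that none does is at most $(1-p)^{\Omega(\tendstoinfty|C|)}$ and is subsumed by the two displayed error terms; on the complementary event some queued vertex is adjacent to some $x\in C$, so $x$ acquires its $r$-th infected neighbour and the infection sweeps all of $C$ along paths in $G[\ninfec]$. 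The infected set has now reached size $\omega(t_c)$, and beyond this point the one-round expansion $S\mapsto S\cup\{w:\ w\text{ has }\ge r\text{ infected neighbours}\}$ multiplies the infected set by $n\hat\pi(|S|)/|S|\to\infty$; a short iteration, each step concentrated by a Chernoff bound with negligible failure probability, drives the infected set to $(1-o(1))n$. The only vertices that can then survive uninfected are those with fewer than $r$ neighbours in a $(1-o(1))n$-set, hence of atypically small degree, of which there are $o(n)$ since $np\to\infty$; thus $|\infec_f|=(1+o(1))n$.

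The main obstacle I anticipate lies in the giant-component step and in the quantitative form of the second martingale estimate it relies on: one must show that $G[\ninfec]$ is \emph{genuinely} supercritical — precisely what the $(1+\error)$ factor in $t_0$ is designed to guarantee, and which forces control of $|\ninfec|$ accurate to within $o(\error)$ of its mean rather than merely to leading order — and then that the giant $C$ is actually seeded, for which the exposure bookkeeping (the internal edges of $\ninfec$, and the edges from the queued vertices into $\ninfec$, must still be unexposed, hence fresh and independent, at the moment they are used) has to be kept scrupulously clean. By comparison, extracting the exact exponents — in particular the denominators $t_0+r\tendstoinfty/3$ and $t_0+(r-1)\tendstoinfty/2$ — from the increment and conditional-variance estimates is routine.
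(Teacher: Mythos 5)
Your overall strategy is the same as the paper's: a martingale argument to run the process past $t_0$ with a margin, then exhibit a giant component among the vertices with $r-1$ infected neighbours, seed it from the queued vertices, and finish with Chernoff sweeps. But you have misattributed the second error term, and this hides a genuine gap. In the paper, the size of the set $\hat{B}$ of vertices with $\ge r-1$ neighbours in $Z(t_1)$ is controlled (Lemma~\ref{almostinfected1}) by a plain conditional Chernoff bound with failure probability $\exp(-\omega(\tendstoinfty^2/t_0))$, which is \emph{negligible} against both displayed error terms; it is not the source of the second term. The second term $\exp\bigl(-(1+o(1))\tfrac{(r-1)\tendstoinfty^2}{8(t_0+(r-1)\tendstoinfty/2)}\bigr)$ is exactly the seeding probability $(1-p)^{|A|\,|B'|}$, which you claim is ``subsumed by the two displayed error terms.'' It is the second displayed error term, and your giant-component estimate is too crude to reproduce it.

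Concretely, you only claim $|\ninfec|\,p\ge 1+\Omega(\error)$ and hence $|C|=\Theta(\error/p)$, giving a seeding failure probability of order $\exp(-\Theta(\tendstoinfty\,\error))$. But $\tendstoinfty$ may be as large as $(1+o(1))t_0/r$, in which case $\tendstoinfty/t_0$ is bounded away from $0$ while $\error=o(1)$, so $\tendstoinfty\,\error=o(\tendstoinfty^2/t_0)$ and your seeding bound is strictly weaker than the stated one. What the paper actually shows is $|\hat B|\,p\ge 1+\tfrac{3}{4}\error+\tfrac{(r-1)\tendstoinfty}{4t_0}$: the additional $\Omega(\tendstoinfty/t_0)$ excess over criticality, coming from the fact that $t_1=t_0+\tendstoinfty/4$ rather than $t_0$, is what produces a giant component of size $\Omega\bigl((\error+\tendstoinfty/t_0)p^{-1}\bigr)$ and hence a seeding probability matching the theorem. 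Also note that $|A|$ is $(1+o(1))\tendstoinfty/4$, not $\tendstoinfty/2$; the ``factor $\tfrac18$'' in the second exponent is the product of this $\tfrac14$ and the $\tfrac12$ in $|B'|$, not a consequence of a $(r-1)$-threshold martingale. Your first error term, the runaway phase, and the exposure-bookkeeping concerns are all in line with the paper.
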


\paragraph{Proof Technique.}
When the number of infected vertices is small (at most $t_0$), we need to perform all calculations very carefully, because a small difference in the number of infected vertices can change the outcome of the process significantly. In order to achieve this we introduce a martingale to show that the number of infected vertices is concentrated around its expectation with \emph{exponentially} high probability. The martingale is similar to the one used in \cite{MR3025687}, however the maximal one step difference in our martingale is significantly smaller and thus provides tighter concentration (Lemma \ref{conc}). 

In the subcritical regime, the expected number of infected vertices is less than $t_c\leq t_0$ and therefore the martingale argument alone implies the result (Section \ref{subcritical}).

In the supercritical case, the martingale argument is not sufficient as the probability that the martingale is sufficiently concentrated decreases significantly once $t_0$ vertices have been infected. However it still ensures the number of infected vertices will reach $t_0$ with exponentially high probability. In fact, at least $t_0+\tendstoinfty/2$ vertices become infected (Lemma \ref{early}). Now take a subset of the infected vertices of  size $t_0+\tendstoinfty/4$ and consider the vertices with at least $r-1$ neighbours in that set. The size of this set is roughly $(1+\varepsilon)p^{-1}$ for some positive sequence $\varepsilon:=\varepsilon(n,\tendstoinfty,\error)$ (Lemma \ref{almostinfected1}) and the subgraph spanned by these vertices is a binomial random graph, $G((1+\varepsilon)p^{-1},p)$. 
A closer examination of $\varepsilon$ reveals that with respect to the appearance of the giant component, we are in the weakly supercritical regime and a recent result of Bollob\'as and Riordan \cite{arXiv:1403.6558} (Theorem \ref{lingiant}) implies that this random graph has a giant component of size $2\varepsilon p^{-1}/(1+2\varepsilon)$ with exponentially high probability (Lemma \ref{giant}). Should any vertex in the giant component have an additional infected neighbour, then every vertex in the giant component will become infected eventually. We show that this happens with exponentially high probability. This implies a significant increase in the number of infected vertices and at this point we already have $\omega(t_0)$ infected vertices.
After this, the process ends in two steps, which can be shown by two simple applications of the Chernoff bound (Lemmas \ref{Chernoff1} and \ref {Chernoff2}).

\section{Preliminaries}

We will use the following form of the Chernoff bound.

\begin{theorem}\cite{MR2283885}\label{chernoff}
Let $X\sim\mathrm{Bin}(n,p)$, i.e.\ a binomial random variable with parameters $n$ and $p$. Then for any $\lambda>0$ 
$$\mathbb{P}[X-\mathbb{E}[X]\leq -\lambda]\leq \exp\left(-\frac{\lambda^2}{2\mathbb{E}[X]}\right)$$
and
$$\mathbb{P}[X-\mathbb{E}[X]\geq \lambda]\leq \exp\left(-\frac{\lambda^2}{2(\mathbb{E}[X]+\lambda/3)}\right).$$
\end{theorem}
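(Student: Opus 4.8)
The statement is the standard Chernoff--Hoeffding tail bound for a binomial random variable, so the plan is the classical exponential-moment (Bernstein/Chernoff) argument. Write $X=\sum_{i=1}^n X_i$ where the $X_i$ are i.i.d.\ Bernoulli$(p)$, and set $\mu:=\mathbb{E}[X]=np$. For the lower tail, fix $s>0$ and use Markov's inequality on $e^{-sX}$:
$$\mathbb{P}[X-\mu\leq-\lambda]=\mathbb{P}[e^{-sX}\geq e^{-s(\mu-\lambda)}]\leq e^{s(\mu-\lambda)}\,\mathbb{E}[e^{-sX}]=e^{s(\mu-\lambda)}\bigl(1-p+pe^{-s}\bigr)^n.$$
Using $1-p+pe^{-s}\leq e^{p(e^{-s}-1)}$ gives the bound $\exp\bigl(s(\mu-\lambda)+\mu(e^{-s}-1)\bigr)$. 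Optimising over $s$ (take $e^{-s}=1-\lambda/\mu$, valid since $\lambda<\mu$, the only case that matters) yields $\exp\bigl(-\mu\,\varphi(\lambda/\mu)\bigr)$ with $\varphi(x)=(1-x)\ln(1-x)+x$, and the elementary inequality $\varphi(x)\geq x^2/2$ for $x\in[0,1]$ finishes the first bound $\exp(-\lambda^2/(2\mu))$. (When $\lambda\geq\mu$ the left tail is identically $0$ since $X\geq0$, so there is nothing to prove.)

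For the upper tail, the same idea with $e^{sX}$ for $s>0$ gives
$$\mathbb{P}[X-\mu\geq\lambda]\leq e^{-s(\mu+\lambda)}\bigl(1-p+pe^{s}\bigr)^n\leq \exp\bigl(-s(\mu+\lambda)+\mu(e^{s}-1)\bigr).$$
Optimising ($e^{s}=1+\lambda/\mu$) produces $\exp\bigl(-\mu\,\psi(\lambda/\mu)\bigr)$ with $\psi(x)=(1+x)\ln(1+x)-x$. The remaining step is the analytic inequality $\psi(x)\geq \dfrac{x^2}{2(1+x/3)}$ for all $x\geq0$, which is equivalent to $(1+x)\ln(1+x)-x\geq \dfrac{x^2/2}{1+x/3}$; one checks it by comparing Taylor/series coefficients or by showing the difference is nonnegative via its derivative (both sides and their first derivatives vanish at $x=0$, and a short computation shows the second derivative of the left side dominates). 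Substituting $x=\lambda/\mu$ and simplifying $\mu\psi(\lambda/\mu)\geq \dfrac{\lambda^2}{2(\mu+\lambda/3)}$ gives exactly the claimed bound $\exp\bigl(-\lambda^2/(2(\mathbb{E}[X]+\lambda/3))\bigr)$.

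The only genuinely non-mechanical part is the last elementary inequality $\psi(x)\geq x^2/(2(1+x/3))$ (and the easier $\varphi(x)\geq x^2/2$); everything else is the routine exponential-moment computation. Since the statement is quoted from \cite{MR2283885}, one may alternatively just cite it, but the self-contained derivation above is short. I would present the two tails in parallel, remark on the trivial range $\lambda\geq\mu$ for the lower tail, and relegate the verification of the two convexity inequalities to a one-line appeal to standard estimates (e.g.\ they appear in \cite{MR2283885} or any standard reference on concentration).
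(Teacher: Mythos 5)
Your derivation is correct, but note that the paper does not prove Theorem~\ref{chernoff} at all: it is imported verbatim from \cite{MR2283885}, so there is no internal argument to compare against. Your route is the standard exponential-moment one, and the only non-mechanical ingredients are the two elementary inequalities $(1-x)\ln(1-x)+x\ge x^2/2$ on $[0,1]$ and $(1+x)\ln(1+x)-x\ge x^2/(2(1+x/3))$ on $[0,\infty)$, both of which are classical (Bennett implies Bernstein). Two small remarks. First, for the lower tail your claim that the probability is ``identically $0$'' when $\lambda\ge\mu$ is only true for $\lambda>\mu$; at $\lambda=\mu$ one has $\mathbb{P}[X\le 0]=(1-p)^n\le e^{-\mu}\le e^{-\mu/2}$, so the bound still holds but needs this one extra line. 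Second, the upper tail is in fact a special case of the martingale bound the paper \emph{does} prove (Theorem~\ref{marconc}): taking $M_i=\sum_{j\le i}(X_j-p)$ with $m=1$ and $\sigma_i^2=p(1-p)\le p$ gives $\mathbb{P}[X-\mu\ge\lambda]\le\exp(-\lambda^2/(2(\mu+\lambda/3)))$ directly, and the analytic fact doing the work there, Lemma~\ref{gprop}'s bound $e^s-1-s=\tfrac{s^2}{2}g(s)\le\tfrac{s^2}{2(1-s/3)}$ followed by the choice $s=\lambda/(\mu+\lambda/3)$, replaces your optimization-plus-$\psi$ step; this avoids proving $\psi(x)\ge x^2/(2(1+x/3))$ separately. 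The lower tail with denominator $2\mu$ is not a consequence of Theorem~\ref{marconc} (which would leave a $\lambda/3$ term), so your $\varphi(x)\ge x^2/2$ argument is genuinely needed there.
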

\smallskip

Let $M_0,\ldots,M_k$ be a sequence of random variables. For each $0\le i\le k$ denote by $\filter{i}$ the $\sigma$-algebra generated by the random variables $M_0,\ldots,M_i$, i.e., $\filter{i}=\sigma(\{M_j\}_{0\leq j\le i})\}$, and we call the sequence $\{\filter{i}\}_{0\leq i \leq k}$ the natural filtration of $M_0,\ldots, M_k$.

The following martingale concentration bound which is a slightly generalised form of a result due to Chung and Lu \cite{MR2283885} (a strengthened version of McDiarmid \cite{MR1678578}) will prove to be vital. For completeness we provide a proof in the appendix.
\begin{theorem}\label{marconc}
For $m_0\in \mathbb{R}$ let $M_0=m_0,M_1,\ldots,M_k$ be a martingale with respect to its natural filtration  $\{\filter{i}=\sigma(\{M_j\}_{0\leq j\le i})\}_{0\leq i \leq k}$ 
whose conditional variance and differences satisfy the following conditions for each $1\leq i \leq k$.
\begin{enumerate}[label=(\roman*)]
\item \label{bounded_diff} $|M_i-M_{i-1}|\leq m$ for some $m\in \mathbb{R_+}$;
\item \label{bounded_var}$\mathrm{Var}[M_i|\filter{i-1}]=\mathbb{E}[(M_i-\mathbb{E}[M_i|\filter{i-1}])^2|\filter{i-1}]\leq \sigma_i^2$ for some $\sigma_i\in \mathbb{R}$.
\end{enumerate}

Then for any $\lambda>0$, we have
$$\mathbb{P}[M_k - m_0 \geq \lambda]\leq \exp\left(-\frac{\lambda^2}{2\left(\sum_{i=1}^k \sigma_i^2+m\lambda/3 \right)}\right).$$
\end{theorem}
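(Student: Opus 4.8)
The plan is to prove Theorem~\ref{marconc} by the standard exponential-moment (Bernstein/Azuma--type) method, tracking the conditional variance rather than just the step size. Write $D_i := M_i - M_{i-1}$, so that $M_k - m_0 = \sum_{i=1}^k D_i$, and $\mathbb{E}[D_i \mid \filter{i-1}] = 0$ because $(M_i)$ is a martingale. Condition~\ref{bounded_diff} gives $|D_i| \le m$ and condition~\ref{bounded_var} gives $\mathbb{E}[D_i^2 \mid \filter{i-1}] = \mathrm{Var}[M_i \mid \filter{i-1}] \le \sigma_i^2$. For a parameter $h > 0$ to be optimized at the end, I will bound $\mathbb{E}[e^{h D_i} \mid \filter{i-1}]$ and then peel off the factors one at a time using the tower property, obtaining
$$\mathbb{E}\left[e^{h(M_k - m_0)}\right] \le \prod_{i=1}^k \sup \mathbb{E}[e^{h D_i}\mid \filter{i-1}],$$
after which Markov's inequality applied to $e^{h(M_k-m_0)}$ yields $\mathbb{P}[M_k - m_0 \ge \lambda] \le e^{-h\lambda}\,\mathbb{E}[e^{h(M_k-m_0)}]$.

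The key analytic step is the one-variable inequality: for a mean-zero random variable $D$ with $|D| \le m$, one has $e^{hx} \le 1 + hx + (e^{hm} - 1 - hm)\frac{x^2}{m^2}$ for all $x \in [-m,m]$ (the function $x \mapsto (e^{hx} - 1 - hx)/x^2$ is nondecreasing), so taking expectations and using $\mathbb{E}[D] = 0$ and $\mathbb{E}[D^2] \le \sigma_i^2$ gives
$$\mathbb{E}[e^{h D_i} \mid \filter{i-1}] \le 1 + \frac{e^{hm} - 1 - hm}{m^2}\,\sigma_i^2 \le \exp\left(\frac{e^{hm} - 1 - hm}{m^2}\,\sigma_i^2\right).$$
Multiplying over $i$, $\mathbb{E}[e^{h(M_k-m_0)}] \le \exp\big(\frac{e^{hm}-1-hm}{m^2}\sum_{i=1}^k \sigma_i^2\big)$. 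Writing $\Sigma := \sum_{i=1}^k \sigma_i^2$, we get $\mathbb{P}[M_k - m_0 \ge \lambda] \le \exp\big(-h\lambda + \frac{e^{hm}-1-hm}{m^2}\Sigma\big)$.

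The last step is to choose $h$ and simplify. Rather than optimizing exactly (which gives a messier bound), I will use the classical estimate $e^{hm} - 1 - hm \le \frac{(hm)^2/2}{1 - hm/3}$ valid for $0 \le hm < 3$, so the exponent is at most $-h\lambda + \frac{h^2 \Sigma/2}{1 - hm/3}$; then the choice $h = \lambda/(\Sigma + m\lambda/3)$ (for which $hm/3 = \frac{m\lambda/3}{\Sigma + m\lambda/3} < 1$) makes this equal to $-\frac{\lambda^2}{2(\Sigma + m\lambda/3)}$, which is exactly the claimed bound. The only mild subtlety to be careful about is checking that this $h$ satisfies $hm < 3$ so that the auxiliary inequality applies — which it does, since $hm = \frac{m\lambda}{\Sigma + m\lambda/3} < 3$ always. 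There is no real obstacle here; the argument is entirely routine, and the point of including it in the appendix is only that the precise constant (with $m\lambda/3$ in the denominator rather than, say, $m\lambda/2$) matters for the applications, so the estimate $e^x - 1 - x \le \frac{x^2/2}{1-x/3}$ must be used in place of cruder bounds.
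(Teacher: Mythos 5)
Your proof is correct and is essentially the paper's own argument: your $(e^{hx}-1-hx)/x^2$ is $\tfrac{h^2}{2}g(hx)$ for the paper's auxiliary function $g$, and the paper uses the same monotonicity fact, the same tower-property peeling of the exponential moment, the same estimate $e^x-1-x\le \tfrac{x^2/2}{1-x/3}$ for $x<3$, and the same choice of $h$. The only point the paper treats explicitly that you gloss over is the degenerate case $\sum_{i=1}^k\sigma_i^2=0$, where your $h$ gives $hm=3$ exactly (not $<3$); there all conditional variances vanish, so $M_k=m_0$ almost surely and the bound holds trivially.
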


We will also need the following result on the appearance of a giant component in $G(n,p)$ by Bollob\'as and Riordan, which follows from Theorem 3 in \cite{arXiv:1403.6558}.

\begin{theorem}\label{lingiant}
Let $\varepsilon=\varepsilon(n)$ satisfy $\varepsilon=O(1)$ and $\varepsilon^3n=\omega(1)$. Denote by $\rho\in(0,1)$ the unique positive solution of $1-\rho=\exp(-(1+\varepsilon)\rho)$. For any $0\leq \gamma < \rho$ with probability $1-\exp(-\Omega(\gamma^2\varepsilon n))$ the binomial random graph $G(n,(1+\varepsilon)/n)$ has a component of size at least $\rho n-\gamma n$.
\end{theorem}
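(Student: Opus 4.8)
The statement is an immediate consequence of Theorem~3 of Bollob\'as and Riordan~\cite{arXiv:1403.6558}, so the plan is essentially to quote that result and translate notation. Their theorem describes the largest component of $G(n,c/n)$ in the near-critical regime, and in the form relevant here it asserts that for $c=1+\varepsilon$ with $\varepsilon=O(1)$ and $\varepsilon^3n=\omega(1)$ the largest component has at least $(\rho-\gamma)n$ vertices except with probability $\exp(-\Omega(\gamma^2\varepsilon n))$, where $\rho$ is the survival probability of a Galton--Watson branching process with $\mathrm{Po}(1+\varepsilon)$ offspring. So the only things to check are that our $\rho$, defined by $1-\rho=\exp(-(1+\varepsilon)\rho)$, is exactly this survival probability ($1-\rho$ being the extinction probability and this equation its fixed-point equation), and that the hypotheses $\varepsilon=O(1)$ and $\varepsilon^3n=\omega(1)$ transfer verbatim.

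If one instead wanted a self-contained proof, I would argue by exploration and sprinkling. First reveal $G_1:=G(n,(1+\varepsilon-\eta)/n)$ for a small $\eta=\eta(n)=o(\varepsilon)$, keeping the remaining edges --- of density $\asymp\eta/n$ --- in reserve. For each vertex $v$, run a breadth-first exploration of its $G_1$-component, revealing the edges to as-yet-unseen vertices one at a time and halting once $\ell$ vertices are reached, where $\ell=\ell(n)$ satisfies $\varepsilon^2\ell\to\infty$ and $\ell=o(\varepsilon n)$ (possible since $\varepsilon^3n\to\infty$). Before the halt the number of newly discovered vertices at each step is sandwiched between $\mathrm{Bin}(n-\ell,(1+\varepsilon-\eta)/n)$ and $\mathrm{Bin}(n,(1+\varepsilon)/n)$, both of mean $1+\varepsilon-o(\varepsilon)$, so near-critical branching-process estimates give $\mathbb{P}[\,|\mathcal{C}_{G_1}(v)|\ge\ell\,]\ge\rho-\gamma/2$ for a suitable $\ell$, whence the number $L$ of vertices lying in $G_1$-components of size at least $\ell$ has $\mathbb{E}[L]\ge(\rho-\gamma/2)n$. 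Showing $L\ge\mathbb{E}[L]-(\gamma/2)n$ with the required probability would be done with a martingale, via Theorem~\ref{marconc}, and finally a sprinkling step --- revealing the reserved edges --- would join all at most $n/\ell$ of the size-$\ge\ell$ components (any two span $\ge\ell^2$ potential edges, so fail to be joined with probability at most $(1-\Theta(\eta/n))^{\ell^2}$; union over pairs together with a Chernoff bound, Theorem~\ref{chernoff}) into a single component containing all $L\ge(\rho-\gamma)n$ of their vertices.

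The main obstacle is obtaining the \emph{exponentially} small failure probability $\exp(-\Omega(\gamma^2\varepsilon n))$, uniformly over the whole regime $\varepsilon^3n\to\infty$ and over all $\gamma\in[0,\rho)$, rather than a mere $o(1)$. Crude stochastic domination yields a ``with high probability'' version at once, but here the branching-process approximation and, especially, the concentration of $L$ need the finer large-deviation behaviour of the near-critical process: the natural edge-exposure martingale has one-step differences of order $\ell$, so Theorem~\ref{marconc} alone gives an exponent of order $\gamma^2 n/\ell^2$, which is weaker than $\gamma^2\varepsilon n$ once $\varepsilon\to0$, and one must instead exploit the sharper fact that the giant's size has fluctuations of order $\sqrt{n/\varepsilon}$ (which is precisely what produces the exponent $\gamma^2\varepsilon n$), while also trading the threshold $\ell$ and the sprinkle density $\eta$ against $\gamma$ and $\varepsilon$ so that the various binomial tails and union bounds all land on the same exponent. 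Carrying this out carefully is exactly the work done in~\cite{arXiv:1403.6558}, which is why we simply invoke its Theorem~3.
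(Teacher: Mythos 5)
Your primary argument — quoting Theorem~3 of Bollob\'as and Riordan and identifying $\rho$ with the survival probability of the $\mathrm{Po}(1+\varepsilon)$ branching process via the fixed-point equation $1-\rho=e^{-(1+\varepsilon)\rho}$ — is exactly what the paper does; it states this theorem as a direct consequence of that reference and gives no further proof. Your honest assessment that the exponential tail bound $\exp(-\Omega(\gamma^2\varepsilon n))$ is the hard part and is precisely the content of the cited work is also correct, so the proposal matches the paper's approach.
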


\section{Setup: Martingale}\label{sec:setup}

In order to analyse the bootstrap percolation on $G(n,p)$ we will use the following reformulation similar to Scalia-Tomba \cite{MR3025687,MR798872}. Roughly speaking we examine the infected vertices one by one and determine the vertices which have at least $r$ neighbours in the set of previously examined vertices. The set of examined vertices until step $t$ is denoted by $\checked(t)$ and the set of infected vertices by $\infec(t)$.
 
Formally let $\infec(0)$ be the set of initially infected vertices chosen uniformly at random from the vertex sets of size $a\in [n]$ and without loss of generality we may assume that $\infec(0)=\{1,...,a\}$. Set $\checked(0)=\emptyset$. For each step $t\in\mathbb{N}$, if $\infec(t-1)\backslash \checked(t-1)\neq\emptyset$, then let $U(t)=\{u(t)\}$, where $u(t)$ is the smallest vertex in $\infec(t-1)\backslash \checked(t-1)$, otherwise set $U(t)=\emptyset$. Set $\checked(t):=\checked(t-1)\cup U(t)$. Now for $t\geq 0$ and $i\in[n-a]:=\{1,\ldots, n-a\}$ let $X(t,i)$ be the indicator random variable for the event that the vertex $a+i$ has at least $r$ neighbours in $\checked(t)$ and set $$\infec(t):=\infec(0)\cup\{a+i:X(t,i)=1, i\in [n-a]\}.$$
The process stops when $t=n$. 

Clearly $\checked(t)\subset \infec(t)$. Let $T$ denote the smallest value of $t$ such that $\infec(t)=\checked(t)$. Note that $t\leq T$ implies that $|\checked(t)|=t$ and thus $T$ is also the smallest $t$, in fact the only $t$ satisfying $|\infec(t)|=t$. Since $|\infec(t)|\leq n$ for every $t\in \{0,1,\ldots, n\}$, 
we have that $T\leq n$. Note further that $\infec(T)=\infec_f$. 

In \cite{MR3025687} a martingale was introduced to analyse this process. Note that in any step of the process at most $n-|\infec(t)|$ vertices may become infected (which in the early stages of the process is $(1+o(1))n$) and martingale concentration inequalities depend on the maximal one step difference. Therefore only very weak concentration can be shown for this martingale.

In order to achieve a better control on the maximal one step difference in a martingale,
we refine the process by dividing every step into rounds, in such a way that in each round \emph{exactly one vertex} $v\in [n]\backslash \infec(0)$ is examined (regardless whether it was examined in earlier rounds or not). Thus each step $t\leq n$ consists of $n-a$ rounds and round $i$ of step $t$ is denoted by $(t,i)$. We denote the round following $(t,i)$ by $(t,i)+1$  and the round preceding $(t,i)$  by $(t,i)-1$. Also the ordering of the rounds is given by the lexicographical order, i.e. $(\tau,\iota)<(t,i)$ if either $\tau<t$ or $\tau=t$ and $\iota<i$.

In round $i$ of step $t$ we examine if the vertex $a+i$ has at least $r$ neighbours in $\checked(t)$ and if it does we add it to the set of infected vertices. Formally for each \begin{math}(t,i)\geq (1,1)\end{math} we let
\begin{align}\label{infectedset}
\infec(t,i):=\infec(0)&\cup \{a+j: X(t,j)=1, j=1,\ldots,i\}\cup \{a+j: X(t-1,j)=1, j=i+1,\ldots, n-a\}.
\end{align}
Clearly we have $\infec(t)=\infec(t,n-a)$. For consistency define $\infec(0,n-a):=\infec(0)$.

Define a function $\pi:\mathbb{N}\rightarrow [0,1]$ by
$$\pi(t):=\left\{
\begin{array}{ll}
\mathbb{P}[\mathrm{Bin}(t,p)\geq r],  & \mbox{for } t\leq T;\\ 
\mathbb{P}[\mathrm{Bin}(T,p)\geq r], & \mbox{for } t > T .
\end{array}
\right.$$
Note that $\pi(t)$ is a random variable.

For \begin{math}(t,i)\geq (0,n-a) \end{math}, define the random variable 
\begin{equation}\label{martingaledef}
M(t,i):= \sum_{j=1}^i \frac{X(t,j)-\pi(t)}{1-\pi(t)}+\sum_{j=i+1}^{n-a}\frac{X(t-1,j)-\pi(t-1)}{1-\pi(t-1)}.
\end{equation}
Since $\pi(t)<1$ for every $t\in\mathbb{N}$, the random variable $M(t,i)$ is well defined.
Tedious arguments show that the sequence of random variables $M(0,n-a),\ldots, M(n,n-a)$ forms a martingale (see the appendix for the proof).

\begin{lemma}\label{martingale}
For each $ (0,n-a)\le (t,i)\le (n,n-a)$ we denote by $\filter{t,i}$ the $\sigma$-algebra  generated by $M(0,n-a),\ldots,M(t,i)$. 
The sequence of  $M(0,n-a),\ldots, M(n,n-a)$ forms a martingale with respect to the filtration $\{\filter{0,n-a},\ldots,\filter{n,n-a}\}$.
\end{lemma}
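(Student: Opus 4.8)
The plan is to reduce the claim to a one‑round identity and then pin down the conditional law of a single vertex's number of neighbours in $\checked(t-1)$. Since $G(n,p)$ has a finite sample space, each $M(t,n-a)$ is integrable; writing $(t,0):=(t-1,n-a)$, telescoping $M(t,n-a)-M(t-1,n-a)=\sum_{i=1}^{n-a}\big(M(t,i)-M(t,i-1)\big)$, and using the tower property together with $\filter{t-1,n-a}\subseteq\filter{t,i-1}$, it then suffices to prove
\begin{equation}\label{planonestep}
\mathbb{E}\big[M(t,i)-M(t,i-1)\,\big|\,\filter{t,i-1}\big]=0\qquad\text{for every round }(t,i).
\end{equation}
Fix a round, put $v:=a+i$ and $D:=\bigl|\{w\in\checked(t-1):vw\in\edges\}\bigr|$, so $X(t-1,i)=\mathds{1}[D\ge r]$. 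A first observation is that the increment $\frac{X(t,i)-\pi(t)}{1-\pi(t)}-\frac{X(t-1,i)-\pi(t-1)}{1-\pi(t-1)}$ is \emph{deterministically} zero off the event $\mathcal{B}:=\{t-1<T\}\cap\{X(t-1,i)=0\}$: if $t-1\ge T$ then $\checked(t)=\checked(t-1)$ and $\pi(t)=\pi(t-1)$, while if $X(t-1,i)=1$ then $X(t,i)=X(t-1,i)=1$. On $\mathcal{B}$ one has $\pi(t-1)=\hat\pi(t-1)$ and $\pi(t)=\hat\pi(t)$, the examined vertex $u(t)$ exists and differs from $v$ (since $v\notin\infec(t-1)$), and $X(t,i)=\mathds{1}\bigl[D+\mathds{1}[vu(t)\in\edges]\ge r\bigr]$.

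The heart of the matter — and what makes the verification tedious — is a coupling. Run, alongside the reformulated process, the \emph{truncated} process $\widetilde{\,\cdot\,}$ obtained by the same rules but with $v$ forbidden ever to become infected (equivalently: run the process on $G(n,p)$ with every edge at $v$ deleted). Its quantities $\widetilde{\checked}(s),\widetilde u(s),\widetilde T,\widetilde\pi(s),\widetilde M(s,j)$ are all measurable with respect to $\mathcal{H}:=\sigma\bigl(\{e\in\edges:v\notin e\}\bigr)$. Let $\tau^{\ast}:=\min\{s:X(s,i)=1\}\in\mathbb{N}\cup\{\infty\}$. An induction on $s$, run \emph{unconditionally} for $s\le\tau^{\ast}-1$, shows that while $v$ is uninfected it never enters $\checked(\cdot)$ and hence never affects which vertex is examined, so the two processes agree step by step; this yields $\{X(t-1,i)=0\}=\{\widetilde D\le r-1\}$, where $\widetilde D:=\bigl|\{w\in\widetilde{\checked}(t-1):vw\in\edges\}\bigr|$, and on that event $\checked(t-1)=\widetilde{\checked}(t-1)$, $u(t)=\widetilde u(t)$, $T\wedge t=\widetilde T\wedge t$, $D=\widetilde D$, and $M(s,j)=\widetilde M(s,j)$ for all $(s,j)\le(t,i-1)$. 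Combining this with the first paragraph, the increment equals, identically,
$$\Psi:=\mathds{1}[t-1<\widetilde T]\,\mathds{1}[\widetilde D\le r-1]\left(\frac{\mathds{1}[\widetilde D+B\ge r]-\hat\pi(t)}{1-\hat\pi(t)}+\frac{\hat\pi(t-1)}{1-\hat\pi(t-1)}\right),\qquad B:=\mathds{1}[v\widetilde u(t)\in\edges].$$

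To finish I would compute $\mathbb{E}[\Psi\mid\mathcal{H}]$. On the $\mathcal{H}$‑measurable event $\{t-1<\widetilde T\}$ the set $\widetilde{\checked}(t-1)$ has exactly $t-1$ elements and $\widetilde u(t)\notin\widetilde{\checked}(t-1)$; conditionally on $\mathcal{H}$ the $t-1$ indicators $\{vw\in\edges:w\in\widetilde{\checked}(t-1)\}$ together with $B$ are i.i.d.\ $\mathrm{Bernoulli}(p)$, so $\widetilde D\mid\mathcal{H}\sim\mathrm{Bin}(t-1,p)$ and $B$ is an independent $\mathrm{Bernoulli}(p)$. Summing over $\widetilde D\in\{0,\dots,r-1\}$ and $B\in\{0,1\}$ and using $\hat\pi(t)-\hat\pi(t-1)=p\,\mathbb{P}[\mathrm{Bin}(t-1,p)=r-1]$ and $1-\hat\pi(t-1)=\mathbb{P}[\mathrm{Bin}(t-1,p)\le r-1]$, the contribution of $\widetilde D\le r-2$ cancels that of $\widetilde D=r-1$, giving $\mathbb{E}[\Psi\mid\mathcal{H}]=0$. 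Since $\Psi=\Psi\,\mathds{1}[\widetilde D\le r-1]=\Psi\,\mathds{1}_{\{X(t-1,i)=0\}}$ and, on $\{X(t-1,i)=0\}$, every $\filter{t,i-1}$‑measurable random variable equals the corresponding (hence $\mathcal{H}$‑measurable) Borel function of the $\widetilde M(s,j)$, for any $F\in\filter{t,i-1}$ we obtain $\mathbb{E}[\Psi\,\mathds{1}_F]=\mathbb{E}[\Psi\,\widetilde{\mathds{1}}_F]=\mathbb{E}\bigl[\widetilde{\mathds{1}}_F\,\mathbb{E}[\Psi\mid\mathcal{H}]\bigr]=0$, which is \eqref{planonestep}. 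The step I expect to be the real obstacle is the coupling: it must be phrased so as to avoid circularity — the event on which the coupling holds is itself described through the coupling — and one must carry the bookkeeping around the stopping time $T$, past which $\pi$ is frozen at the random value $\hat\pi(T)$; this is precisely the tedious verification deferred to the appendix.
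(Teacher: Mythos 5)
Your proof is correct, and its skeleton is the paper's: reduce to a single round, observe that the increment $\frac{X(t,i)-\pi(t)}{1-\pi(t)}-\frac{X(t-1,i)-\pi(t-1)}{1-\pi(t-1)}$ vanishes identically unless $t\le T$ and $X(t-1,i)=0$, and on that event exploit that the conditional transition probability of $X(\cdot,i)$ from $0$ to $1$ is $(\hat{\pi}(t)-\hat{\pi}(t-1))/(1-\hat{\pi}(t-1))$; your cancellation over $\widetilde D\le r-1$ and $B\in\{0,1\}$ reproduces exactly \eqref{condexpX}. Where you genuinely diverge is in how the conditional-independence step is justified. The paper first proves Lemma \ref{filtrationsmatch}, namely that $\filter{t,i}$ coincides with the $\sigma$-algebra generated by the $X(\tau,\iota)$ (the nontrivial point being the measurability of $\pi(t)$ and $\mathds{1}_{T<t}$ with respect to the past), and then argues that $X(t,i)$ depends only on edges between $a+i$ and $\checked(t)$, which are disjoint from the edges driving $X(\tau,\iota)$ for $\iota\ne i$. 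You instead couple with the process run on the graph with every edge at $v=a+i$ deleted, so that $\checked(t-1)$, $u(t)$ and $T\wedge t$ acquire $\mathcal{H}$-measurable surrogates on the support of the increment, and you integrate directly against $\mathcal{H}=\sigma(\{e:v\notin e\})$. This bypasses Lemma \ref{filtrationsmatch} altogether and makes fully rigorous the point at which the paper is informal — $\checked(t)$ is itself a random, edge-dependent set, so ``the edges between $a+i$ and $\checked(t)$'' is not a priori a fixed collection, and your truncated process is precisely the device that breaks this circularity. The price is the step-by-step verification that $M(s,j)=\widetilde M(s,j)$ for all $(s,j)\le(t,i-1)$ on $\{X(t-1,i)=0\}$ (including the terms $X(\sigma,i)$, which all vanish there by monotonicity), which you correctly flag as the tedious part; it goes through, and both routes terminate in the same binomial computation.
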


Next we show that $M(t,i)$  is concentrated around its expectation.

\begin{lemma}\label{conc}
Let $t\in \{0,\ldots,n\}$ and $\lambda\in \mathbb{R}_+$ be given. Then we have
$$\mathbb{P}\left[\bigwedge_{(0,n-a)\leq(\tau,i)\leq (t,n-a)} \Big(M(\tau,i)>-\lambda\Big) \right]\geq 1-\exp\left(-\frac{\lambda^2(1-\hat{\pi}(t))^3}{2(n\hat{\pi}(t)+\lambda/3)}\right)$$
and
$$\mathbb{P}\left[\bigwedge_{(0,n-a)\leq(\tau,i)\leq (t,n-a)} \Big(M(\tau,i)<\lambda\Big) \right]\geq 1-\exp\left(-\frac{\lambda^2(1-\hat{\pi}(t))^3}{2(n\hat{\pi}(t)+\lambda/3)}\right).$$
\end{lemma}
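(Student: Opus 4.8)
The plan is to obtain both inequalities from the martingale concentration bound of Theorem~\ref{marconc}, applied to a \emph{stopped} version of the martingale of Lemma~\ref{martingale}. List the rounds in lexicographic order and relabel the martingale as $M_0:=M(0,n-a),M_1,\dots,M_k:=M(n,n-a)$; since $\checked(0)=\emptyset$ and $r\ge2$, we have $X(0,j)=0$ for all $j$ and $\pi(0)=\hat{\pi}(0)=0$, so $M_0=0$. For the first inequality, fix $\lambda>0$ and let $R$ be the first round $(\tau,i)\le(t,n-a)$ with $M(\tau,i)\le-\lambda$, and $R:=(t,n-a)$ if there is no such round. This is a bounded stopping time for the filtration $\{\filter{\tau,i}\}$, so the stopped process $M^R_j:=M_{\min(j,R)}$ is again a martingale. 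Since $R\le(t,n-a)$, we have $M^R_k=M_R$, and $\{M_R\le-\lambda\}$ is precisely the complement of the event that $M(\tau,i)>-\lambda$ for all $(\tau,i)\le(t,n-a)$. As $-M^R$ is also a martingale with starting value $0$, it remains to bound $\mathbb{P}[-M^R_k\ge\lambda]$ via Theorem~\ref{marconc}, for which I need a uniform one-step difference bound $m$ and conditional-variance bounds $\sigma_{\tau,i}^2$.

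For a round $(\tau,i)$ with $1\le\tau\le t$, the definition of $M$ gives
$M(\tau,i)-M((\tau,i)-1)=\frac{X(\tau,i)-\pi(\tau)}{1-\pi(\tau)}-\frac{X(\tau-1,i)-\pi(\tau-1)}{1-\pi(\tau-1)}$
(this also covers $i=1$, the preceding round then being $(\tau-1,n-a)$). Using $X(\tau-1,i)\le X(\tau,i)\in\{0,1\}$ together with the facts that $\pi$ agrees with $\hat{\pi}$ up to step $T$ and is frozen at $\hat{\pi}(T)$ afterwards, so that $\pi$ is nondecreasing and $\pi(\tau-1)\le\pi(\tau)\le\hat{\pi}(\tau)\le\hat{\pi}(t)$, one checks that this increment is $0$ on $\{X(\tau-1,i)=1\}$, equals $\frac{1}{1-\pi(\tau-1)}$ when $X(\tau-1,i)<X(\tau,i)$, and lies in $\left(-\frac{1}{1-\pi(\tau)},0\right]$ when $X(\tau-1,i)=X(\tau,i)=0$. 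Hence its absolute value is at most $m:=\frac{1}{1-\hat{\pi}(t)}$; for rounds of steps $>t$ the stopped martingale has increment $0$, so $m$ is a valid global difference bound.

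For the conditional variance, where the constant $1$ has to come out exactly, note that conditionally on the history up to the round preceding $(\tau,i)$ the increment vanishes on $\{X(\tau-1,i)=1\}$, while on $\{X(\tau-1,i)=0\}$ it is a two-valued mean-zero random variable with values as above; the martingale property then forces the conditional probability of $X(\tau,i)=1$ to be $\frac{\pi(\tau)-\pi(\tau-1)}{1-\pi(\tau-1)}$, and a short two-point computation gives the conditional variance $\frac{\pi(\tau)-\pi(\tau-1)}{(1-\pi(\tau-1))^2(1-\pi(\tau))}\,\mathds{1}[X(\tau-1,i)=0]$. Since $\pi(\tau)-\pi(\tau-1)\le\hat{\pi}(\tau)-\hat{\pi}(\tau-1)$ (the left side being $0$ once $\tau>T$) and $1-\pi(\tau),1-\pi(\tau-1)\ge1-\hat{\pi}(t)$, this is at most the deterministic quantity $\sigma_{\tau,i}^2:=\frac{\hat{\pi}(\tau)-\hat{\pi}(\tau-1)}{(1-\hat{\pi}(t))^3}$; for rounds of steps $>t$ I take $\sigma_{\tau,i}^2:=0$. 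Summing over the $n-a$ rounds of each of steps $1,\dots,t$, the sum telescopes to $\sum\sigma_{\tau,i}^2\le\frac{(n-a)(\hat{\pi}(t)-\hat{\pi}(0))}{(1-\hat{\pi}(t))^3}\le\frac{n\hat{\pi}(t)}{(1-\hat{\pi}(t))^3}$.

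Feeding $m$ and $\sum\sigma_{\tau,i}^2$ into Theorem~\ref{marconc} (applied to $-M^R$) then gives that $\mathbb{P}[-M^R_k\ge\lambda]$ is at most $\exp\left(-\frac{\lambda^2}{2\left(\frac{n\hat{\pi}(t)}{(1-\hat{\pi}(t))^3}+\frac{\lambda}{3(1-\hat{\pi}(t))}\right)}\right)$, and since $(1-\hat{\pi}(t))^2\le1$ the quantity in the inner parentheses is at most $\frac{n\hat{\pi}(t)+\lambda/3}{(1-\hat{\pi}(t))^3}$, which yields the first asserted bound. The second inequality follows from the same argument with $R$ replaced by the first round $(\tau,i)\le(t,n-a)$ at which $M(\tau,i)\ge\lambda$, applying Theorem~\ref{marconc} to $M^R$ itself rather than to $-M^R$. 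I expect the main obstacle to be the exact conditional-variance identity and, beneath it, the structural facts that $\pi(\tau)$ is measurable with respect to the history of the preceding round and is the correct conditional infection probability --- the same (tedious) ingredients that justify Lemma~\ref{martingale}; the rest is routine bookkeeping together with the elementary inequality that collapses the denominator.
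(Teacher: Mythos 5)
Your proof is correct and follows essentially the same route as the paper: reduce to one tail via the $\pm M$ symmetry, pass to a stopped (the paper says ``frozen'') martingale to convert the running-maximum event into a terminal-value event, bound the one-step difference by $(1-\hat{\pi}(t))^{-1}$ and the telescoping variance sum by $n\hat{\pi}(t)/(1-\hat{\pi}(t))^3$, and apply Theorem~\ref{marconc}. The only deviations are cosmetic --- you compute the conditional variance exactly via the two-point formula where the paper uses $\mathrm{Var}[X]\leq\mathbb{E}[X]$, and you treat the lower tail first --- and both yield the same constants.
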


\begin{proof}
We  only need to consider the bound on the probability that $M(\tau,i)< \lambda$ for each \linebreak[4]
$(0,n-a)\leq(\tau,i)\leq (t,n-a)$. 
The other case follows simply from the fact that if the sequence of random variables $M(0,n-a),\ldots,M(t,n-a)$ forms a martingale with respect to a filtration, then $-M(0,n-a),\ldots,-M(t,n-a)$ is also a martingale with respect to the same filtration and they both have the same conditional variance and maximal difference.
In order to show that the bounds hold for each step, we introduce the following martingale
$$\hat{M}(\tau,i)=\left\{
\begin{array}{ll}
M(\tau,i) &\mbox{if } \hat{M}((\tau,i)-1)< \lambda, \\
\hat{M}((\tau,i)-1) & \mbox{otherwise}.
\end{array}
\right. $$
Similarly to \begin{math}M(\tau,i)\end{math} we denote  by \begin{math} \hat{\mathcal{F}}(\tau,i) \end{math} the $\sigma$-algebra generated by \begin{math} \hat{M}(0,n-a),\ldots,\hat{M}(\tau,i) \end{math}.
Note that if there exists a round $(\tau,i)$ such that $M(\tau,i)\geq \lambda$, then we have $\hat{M}(\tau',i')\geq \lambda$ for every $(\tau',i')\geq (\tau,i)$. Therefore $\hat{M}(t,n-a)< \lambda$ implies that for every $(0,n-a)\leq(\tau,i)\leq (t,n-a)$ we have $M(\tau,i)< \lambda$. Thus it suffices to show that
\begin{equation}\label{suffcond}
\mathbb{P}\left[\hat{M}(t,n-a)\geq \lambda \right]\leq \exp\left(-\frac{\lambda^2(1-\hat{\pi}(t))^3}{2(n\hat{\pi}(t)+\lambda/3)}\right).
\end{equation}

Our aim is to apply Theorem \ref{marconc} to $\hat{M}(\tau,i)$. For this we need an upper bound on the maximal one step difference and conditional variance.

First we give an upper bound on the one step difference. 
We can easily compute
\begin{equation}\label{onestepdiff}
M(t,i)-M((t,i)-1)\stackrel{\eqref{martingaledef}}{=}\frac{X(t,i)-\pi(t)}{1-\pi(t)}-\frac{X(t-1,i)-\pi(t-1)}{1-\pi(t-1)}.
\end{equation}
By \eqref{onestepdiff} and $\pi(\tau)\leq \hat{\pi}(\tau)$, we have for $\tau\leq t$ 
\begin{align*}
|\hat{M}(\tau,i)-\hat{M}((\tau,i)-1)|&
\leq\max\left\{\frac{\hat{\pi}(\tau)}{1-\hat{\pi}(\tau)}-\frac{\hat{\pi}(\tau-1)}{1-\hat{\pi}(\tau-1)},1+\frac{\hat{\pi}(\tau-1)}{1-\hat{\pi}(\tau-1)}\right\}.
\end{align*}
Since $\hat{\pi}(\tau)=\mathbb{P}[\mathrm{Bin}(\tau,p)\geq r]$ we have
\begin{align*}
\hat{\pi}(\tau)-\hat{\pi}(\tau-1)=\binom{\tau-1}{r-1}p^r(1-p)^{\tau-r}\leq \binom{\tau}{r-1}p^r(1-p)^{\tau-r}\\
\stackrel{p=o(1)}{\leq} \binom{\tau}{r-1}p^{r-1}(1-p)^{\tau-r+1}\leq \sum_{i=0}^{r-1}\binom{\tau}{i}p^i(1-p)^{\tau-i}=1-\hat{\pi}(\tau)
\end{align*}
and consequently
\begin{equation*}
\frac{\hat{\pi}(\tau)}{1-\hat{\pi}(\tau)}-\frac{\hat{\pi}(\tau-1)}{1-\hat{\pi}(\tau-1)}\leq 1+\frac{\hat{\pi}(\tau-1)}{1-\hat{\pi}(\tau-1)}.
\end{equation*}
Therefore we obtain
\begin{align}\label{onestepdiff-bound}
|\hat{M}(\tau,i)-\hat{M}((\tau,i)-1)|\leq 1+\frac{\hat{\pi}(\tau-1)}{1-\hat{\pi}(\tau-1)}=\frac{1}{1-\hat{\pi}(\tau-1)}\stackrel{\tau\leq t}{\leq}\frac{1}{1-\hat{\pi}(t)}.
\end{align}

Next we examine the conditional variance. Conditional on $\hat{M}((\tau,i)-1)\geq \lambda$, the random variable $\mathrm{Var}[\hat{M}(\tau,i)|\hat{\mathcal{F}}((\tau,i)-1)]=0$, on the other hand conditional on $\hat{M}((\tau,i)-1)< \lambda$ we have
$$\mathrm{Var}[\hat{M}(\tau,i)|\hat{\mathcal{F}}((\tau,i)-1),\hat{M}((\tau,i)-1)< \lambda]
= \mathrm{Var}[M(\tau,i)|\filter{(\tau,i)-1}].$$
Note that 
\begin{align*}
\mathrm{Var}[M(\tau,i)|\filter{(\tau,i)-1}]&=\mathrm{Var}\left[\left.\frac{X(\tau,i)}{1-\pi(t)}\right| \filter{(\tau,i)-1}\right] \leq\frac{1}{(1-\hat{\pi}(t))^2}\mathrm{Var}[X(\tau,i)| \filter{(\tau,i)-1}].
\end{align*}
We shall show that
\begin{equation}\label{variance}
\mathrm{Var}[X(\tau,i)| \filter{(\tau,i)-1}]\leq \frac{\hat{\pi}(\tau)-\hat{\pi}(\tau-1)}{1-\hat{\pi}(\tau)}.
\end{equation}
Recall that $X(\tau-1,i)=1$ implies $X(\tau,i)=1$ and that $\tau>T$ implies $X(\tau,i)=X(\tau-1,i)$. 
In both of these cases we have
\begin{align*}
\mathrm{Var}[X(\tau,i)| \filter{(\tau,i)-1},X(\tau-1,i)=1]&=\mathrm{Var}[X(\tau,i)| \filter{(\tau,i)-1},\tau>T]=0\\
&\stackrel{\hat{\pi}(\tau)\geq \hat{\pi}(\tau-1)}{\leq} \frac{\hat{\pi}(\tau)-\hat{\pi}(\tau-1)}{1-\hat{\pi}(\tau)}. 
\end{align*}
Now assume $\tau\leq T$ and $X(\tau-1,i)=0$. Since $X(\tau,i)$ is an indicator random variable, we have
\begin{align*}
\mathrm{Var}[X(\tau,i)| \filter{(\tau,i)-1},\tau\leq T, X(\tau-1,i)=0]&\leq \mathbb{E}[X(\tau,i)| \filter{(\tau,i)-1},\tau\leq T, X(\tau-1,i)=0]\\
&\stackrel{\eqref{condexpX}}{=}\frac{\hat{\pi}(\tau)-\hat{\pi}(\tau-1)}{1-\hat{\pi}(\tau-1)}
\leq\frac{\hat{\pi}(\tau)-\hat{\pi}(\tau-1)}{1-\hat{\pi}(\tau)}
\end{align*}
and thus \eqref{variance} holds.
Therefore we have
\begin{align}\label{condvariance}
\sum_{\tau=1}^{t}\sum_{i=1}^{n-a}\mathrm{Var}[M(\tau,i)|\filter{(\tau,i)-1}] &\stackrel{\eqref{variance}}{\leq} \sum_{\tau=1}^t\sum_{i=1}^{n-a}\frac{\hat{\pi}(\tau)-\hat{\pi}(\tau-1)}{(1-\hat{\pi}(\tau))^3}\nonumber\\
&\leq \sum_{\tau=1}^t \frac{n(\hat{\pi}(\tau)-\hat{\pi}(\tau-1))}{(1-\hat{\pi}(t))^3}\leq \frac{n\hat{\pi}(t)}{(1-\hat{\pi}(t))^3}.
 \end{align}
Note that $M(0,n-a)=0$. Thus by Theorem \ref{marconc} with \eqref{onestepdiff-bound} and \eqref{condvariance} we have
\begin{equation*}
\mathbb{P}[\hat{M}(t,n-a)\geq \lambda] 
\leq \exp\left(-\frac{\lambda^2}{2}\left(\frac{n\hat{\pi}(t)}{(1-\hat{\pi}(t))^3} +\frac{\lambda}{3(1-\hat{\pi}(t))}\right)^{-1}\right)
\end{equation*}
implying \eqref{suffcond}. This completes the proof.
\end{proof}

The previous lemma allows us to analyse the process in the first $t_0$ steps. This will be used in the proofs of Theorems \ref{mainsub} and \ref{mainsup}.

\section{Subcritical case: Proof of Theorem \ref{mainsub}}\label{subcritical}

We want to investigate the number of infected vertices at time $t_c$. By the definition of $a_c$ and $t_c$, we have
\begin{equation}\label{critical}
a_c=-\min_{t\leq t_0}\frac{n\hat{\pi}(t)-t}{1-\hat{\pi}(t)}=\frac{t_c-n\hat{\pi}(t_c)}{1-\hat{\pi}(t_c)}.
\end{equation}

Using \eqref{infectedset} and \eqref{martingaledef} we can express the number of infected vertices until step $t$ by
\begin{equation}\label{marinfec}
|\infec(t)|\stackrel{\eqref{infectedset}}{=}a+\sum_{i=1}^{n-a}X(t,i)\stackrel{\eqref{martingaledef}}{=}a+M(t,n-a)(1-\pi(t))+(n-a)\pi(t)
\end{equation}
and in particular we have for step $t_c$
\begin{align*}
|\infec(t_c)|&=a+(1-\pi(t_c))M(t_c,n-a)+(n-a)\pi(t_c).
\end{align*}
Since $\pi(t)\leq \hat{\pi}(t)$ and $a=a_c-\tendstoinfty$, we obtain
\begin{align}
|\infec(t_c)|&\leq a+M(t_c,n-a)+(n-a)\hat{\pi}(t_c)\nonumber\\
&=(a_c-\tendstoinfty)(1-\hat{\pi}(t_c))+n\hat{\pi}(t_c)+M(t_c,n-a)\nonumber\\
&\stackrel{\eqref{critical}}{=}t_c-n\hat{\pi}(t_c)+n\hat{\pi}(t_c)-\tendstoinfty(1-\hat{\pi}(t_c))+M(t_c,n-a)\nonumber\\
&=t_c-\tendstoinfty(1-\hat{\pi}(t_c))+M(t_c,n-a).\label{numinfver}
\end{align}
Since $t_0=(1+o(1))((r-1)!/(np^r))^{1/(r-1)}$, we have
\begin{equation}\label{t0small}
t_0 p=O\left(\left(\frac{1}{np^r}\right)^{1/(r-1)}p\right)=O\left(\left(\frac{1}{np}\right)^{1/(r-1)}\right)\stackrel{np=\omega(1)}{=}o(1).
\end{equation}
Furthermore,
\begin{align}
\hat{\pi}(t_0)&=\mathbb{P}[\mathrm{Bin}(t_0,p)\geq r]=\sum_{j=r}^{t_0}\binom{t_0}{j}p^{j}(1-p)^{t_0-j}\stackrel{\eqref{t0small}}{=}(1+o(1))\frac{t_0^r p^r}{r!}\nonumber\\
&=(1+o(1))\frac{t_0^{r-1}p^r}{r!}t_0=(1+o(1))\frac{1}{r}\frac{t_0}{n}\stackrel{np=\omega(1)}{=}o\left(t_0 p\right)\stackrel{\eqref{t0small}}{=}o(1).\label{probt0}
\end{align}
Applying Lemma \ref{conc} with $\lambda=(1-\sqrt{\hat{\pi}(t_c)})\tendstoinfty$, we have 
\begin{align}\label{boundonMtc}
M(t_c,n-a)< (1-\sqrt{\hat{\pi}(t_c)})\tendstoinfty
\end{align}
 with probability at least
\begin{align*}
1-\exp\left(-(1+o(1))\frac{\tendstoinfty^2}{2(n\hat{\pi}(t_c)+\tendstoinfty/3)}\right)&\stackrel{\hat{\pi}(t_c)\leq \hat{\pi}(t_0)}{\geq}1-\exp\left(-(1+o(1))\frac{\tendstoinfty^2}{2(n\hat{\pi}(t_0)+\tendstoinfty/3)}\right)\\
&\stackrel{\eqref{probt0}}{=} 1-\exp\left(-(1+o(1))\frac{r \tendstoinfty^2}{2(t_0+r\tendstoinfty/3)}\right).
\end{align*}
Furthermore,  \eqref{numinfver} and \eqref{boundonMtc} imply that with probability at least $1-\exp\left(-(1+o(1))\frac{r \tendstoinfty^2}{2(t_0+r\tendstoinfty/3)}\right)$ we have
$$|\infec(t_c)|\leq t_c-(1-\hat{\pi}(t_c))\tendstoinfty+(1-\sqrt{\hat{\pi}(t_c)})\tendstoinfty=t_c+(\hat{\pi}(t_c)-\sqrt{\hat{\pi}(t_c)})\tendstoinfty\stackrel{\hat{\pi}(t_c)<1}{<}t_c$$
and therefore   $|\infec_f|=T< t_c$, as desired.

\section{Supercritical case: Proof of Theorem \ref{mainsup}}

\begin{figure}[H]
\begin{center}
\begin{tikzpicture}[scale=0.94]
\draw (0,0) ellipse (5 and 4);
\node (X00) at ($(0,0)+(130:5 and 4)$) [circle,fill=black, inner sep=0pt] {};
\node (X01) at ($(0,0)+(-130:5 and 4)$) [circle,fill=black, inner sep=0pt] {};
\node (Z) at ($(0,0)+(-20:5 and 4)$) {};
\node (Y) at ($(0,0)+(-160:5 and 4)$) [circle,fill=black, inner sep=0pt] {};

\draw [name path=X00--X01] (X00)--(X01);
\path [name path=Y--Z] (Y)--(Z);

\path [name intersections={of=X00--X01 and Y--Z}];
\draw (Y)-- (intersection-1);

\node at (-4,-1){$\checked(t_1)$};
\node at (-4,-2){$A$};

\draw (-2,0.5) ellipse (1 and 2);

\node (B) at (-2.5,1.5) [circle,fill=black, inner sep=1pt] {};
\node (Z0) at (-4,2) [circle,fill=black, inner sep=1pt] {};
\node (Z1) at (-4,1.5) [circle,fill=black, inner sep=1pt] {};
\draw (-3.75,1.5) arc (190:166:1);
\node at (-3.6,1.25){\footnotesize {$r-1$}};

\draw (Z0)--(B);
\draw (Z1)--(B);
\node at (-2,2) {$\hat{B}$};

\node (B0) at (-2,1) [circle,fill=black, inner sep=1pt] {};
\node (B1) at (-2.5,0.75) [circle,fill=black, inner sep=1pt] {};
\node (B2) at (-1.5,0.7) [circle,fill=black, inner sep=1pt] {};
\node (B3) at (-2,0.5) [circle,fill=black, inner sep=1pt] {};
\node (B4) at (-2.25,0) [circle,fill=black, inner sep=1pt] {};
\node (B5) at (-1.75,0) [circle,fill=black, inner sep=1pt] {};
\node (B6) at (-1.75,-0.55) [circle,fill=black, inner sep=1pt] {};

\draw (B0)--(B1);
\draw (B0)--(B2);
\draw (B0)--(B3);
\draw(B3)--(B4);
\draw (B3)--(B5);
\draw (B5)--(B6);

\node (A0) at (-3.5,-2.5) [circle,fill=black, inner sep=1pt] {};
\draw (A0)--(B1);
\draw (-2,0.25) ellipse (0.75 and 1.25);

\node at (-2,-0.5){$B$};

\draw (1.5,2) ellipse (2 and 1);
\node (C0) at (0.5,2) [circle,fill=black, inner sep=1pt] {};
\draw (B2)--(C0);
\draw (B5)--(C0);
\draw (B6)--(C0);
\draw (-0.5,1.25*2.55/2.25-0.55) arc (230:207:1);
\node at (-0.4,0.7){\footnotesize{$r$}};

\node (C1) at (1,2) [circle,fill=black, inner sep=1pt] {};
\node (C2) at (1.5,2) [circle,fill=black, inner sep=1pt] {};
\node (C3) at (2,2) [circle,fill=black, inner sep=1pt] {};
\node at (2,2.5) {$C$};

\draw (1.5,-2) ellipse (2 and 1);
\node (D0) at (1,-2) [circle,fill=black, inner sep=1pt] {};
\draw (D0)--(C1);
\draw (D0)--(C2);
\draw (D0)--(C3);
\node at (2, -2) {$D$};
\draw (1,0.5) arc (95:71:1.5);
\node at (1.75,0.4){\footnotesize{$r$}};

\end{tikzpicture}
\caption{Spread of the infection}\label{onlypicture}
\end{center}
\end{figure}
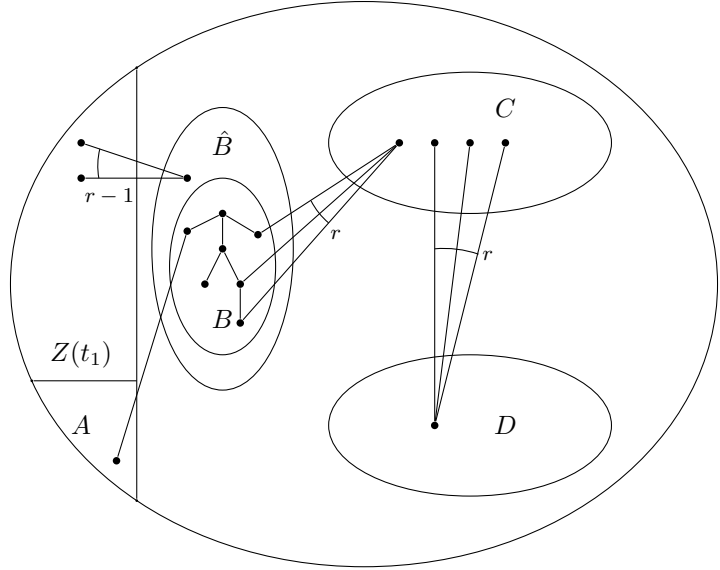

For the remainder of the paper fix an $\tendstoinfty$ satisfying the conditions of Theorem \ref{mainsup}, i.e.,  $\tendstoinfty=\omega(\sqrt{a_c})$ and $\tendstoinfty\leq t_0-a_c$. Define 
$$t_1:=t_0+\tendstoinfty/4.$$

The proof of Theorem \ref{mainsup} consists of several steps (see Figure \ref{onlypicture}). 
First we show that the process runs for at least $t_1$ steps and in fact at this stage there exists a \lq large'  set of vertices  $A\subset \infec(t_1)\backslash \checked(t_1)$ (Lemma~\ref{early}). 
Next we consider the set $\hat{B}$ of vertices which have at least $r-1$ neighbours in $\checked(t_1)$ and prove a lower bound on the number of these vertices (Lemma \ref{almostinfected1}). 
It turns out that the size of $\hat{B}$ is large enough for it to contain a giant component $B$ (Lemma \ref{giant}). 
If there exists a vertex in $A$ which is connected to a vertex in $B$, then every vertex in $B$ will become infected. Finally we examine the set $C$ of vertices which have at least $r$ neighbours in $B$  (Lemma \ref{Chernoff1}) and the set $D$ of vertices which have at least $r$ neighbours in $C$ (Lemma \ref{Chernoff2}). We complete the proof by showing $|D|=(1+o(1))n$, because $D\subset \infec_f$.

\begin{lemma}\label{early}
If $|\infec(0)|=a_c+\tendstoinfty$, then with probability at least
$$1-\exp\left(-(1+o(1))\frac{r \tendstoinfty^2}{8( t_0+r\tendstoinfty/3)}\right)$$
we have $T>t_1$ and $|\infec(t_1)|\geq t_1+(1-2\hat{\pi}(t_0))\tendstoinfty/4$.
\end{lemma}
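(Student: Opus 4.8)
The plan is to read off the size of $\infec(t)$ from the martingale identity~\eqref{marinfec} for $t$ up to $t_0$, using the lower-tail estimate of Lemma~\ref{conc}, and then to transfer the conclusion to $t_1=t_0+\tendstoinfty/4$ by the trivial monotonicity of $t\mapsto|\infec(t)|$.

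First I would set $\lambda:=\tendstoinfty/2$ and let $\mathcal G$ be the event that $M(\tau,i)>-\lambda$ for all rounds $(0,n-a)\le(\tau,i)\le(t_0,n-a)$. Applying Lemma~\ref{conc} with $t=t_0$ and substituting the estimates $\hat\pi(t_0)=o(1)$ and $n\hat\pi(t_0)=(1+o(1))t_0/r$ recorded in~\eqref{probt0}, a short computation gives that $\mathbb P[\mathcal G]$ is at least the probability appearing in the lemma; the hypotheses $\tendstoinfty=\omega(\sqrt{a_c})$ and $\tendstoinfty\le t_0-a_c$ (so $\tendstoinfty=O(t_0)$ and $a_c=\Theta(t_0)$) are exactly what force this to tend to $1$. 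It therefore suffices to show that $\mathcal G$ implies both $T>t_1$ and $|\infec(t_1)|\ge t_1+(1-2\hat\pi(t_0))\tendstoinfty/4$.

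So assume $\mathcal G$ holds. By the definition of $a_c$, every integer $t\le t_0$ satisfies $t\le a_c+(n-a_c)\hat\pi(t)$. If we had $T\le t_0$, then $\pi(T)=\hat\pi(T)$, and~\eqref{marinfec} at step $T$, together with $|\infec(T)|=T$ and $M(T,n-a)>-\lambda$, would give $T>(a_c+\tendstoinfty-\lambda)(1-\hat\pi(T))+n\hat\pi(T)>a_c+(n-a_c)\hat\pi(T)\ge T$ (the strict middle inequality because $\tendstoinfty-\lambda=\tendstoinfty/2>0$), a contradiction. Hence $T>t_0$, so $\pi(t_0)=\hat\pi(t_0)$, and the same computation at step $t_0$, now using $a_c+(n-a_c)\hat\pi(t_0)\ge t_0$, yields
$$|\infec(t_0)|>(a_c+\tendstoinfty-\lambda)(1-\hat\pi(t_0))+n\hat\pi(t_0)\ge t_0+(1-\hat\pi(t_0))\tendstoinfty/2=t_1+(1-2\hat\pi(t_0))\tendstoinfty/4;$$
in particular $|\infec(t_0)|>t_1$ because $\hat\pi(t_0)<1/2$. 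Since $\checked(t)$ is non-decreasing in $t$, each indicator $X(t,i)$ — and hence $|\infec(t)|$ — is non-decreasing in $t$. Consequently, if $T\le t_1$ then, using $T\ge t_0$, we would obtain $T=|\infec(T)|\ge|\infec(t_0)|>t_1\ge T$, a contradiction; so $T>t_1$. Finally $|\infec(t_1)|\ge|\infec(t_0)|>t_1+(1-2\hat\pi(t_0))\tendstoinfty/4$, completing the argument.

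The main obstacle is the mild circularity built into the set-up: identity~\eqref{marinfec} is phrased through $\pi(t)$, which coincides with the deterministic function $\hat\pi(t)$ only for $t\le T$, while $T$ is precisely the quantity we want to bound from below. This is why the argument runs in two stages — $T>t_0$ is forced by the extremal characterisation of $a_c$, which is only available in the range $t\le t_0$, and only then is $T>t_1$ obtained, for free, from the monotonicity of $|\infec(\cdot)|$. The other point to watch is the bookkeeping of constants: $\lambda$ must be taken essentially equal to $\tendstoinfty/2$ so that the deterministic lower bound on $|\infec(t_0)|$ comes out exactly as $t_1+(1-2\hat\pi(t_0))\tendstoinfty/4$, while the estimate $n\hat\pi(t_0)=(1+o(1))t_0/r$ from~\eqref{probt0} is what produces the factors $t_0$ and $r$ in the exponent of the failure probability.
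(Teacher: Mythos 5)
Your proof is correct and follows essentially the same route as the paper: Lemma~\ref{conc} with $\lambda=\tendstoinfty/2$ up to step $t_0$, the extremal characterisation of $a_c$ combined with identity~\eqref{marinfec}, and monotonicity of $|\infec(\cdot)|$ to pass from $t_0$ to $t_1$. The only (harmless) difference is that you establish $T>t_0$ by a contradiction evaluated at $t=T$, where $\pi(T)=\hat\pi(T)$ is automatic, whereas the paper runs an induction showing $|\infec(t)|\ge t+(1-\hat\pi(t))\tendstoinfty/2$ for all $t\le t_0$.
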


\begin{proof}
From \eqref{probt0} and Lemma \ref{conc} with $\lambda=\tendstoinfty/2$ we have 
\begin{align}\label{boundMti}
M(t,i)>-\tendstoinfty/2, \quad \forall (0,n-a)\leq(t,i)\leq (t_0,n-a)
\end{align}
with probability  at least
$$1-\exp\left(-(1+o(1))\frac{r \tendstoinfty^2}{8( t_0+r\tendstoinfty/3)}\right).$$

Conditional on this event we shall show that $T>t_0$ and $|\infec(t_0)|\geq t_0+(1-\hat{\pi}(t_0))\tendstoinfty/2$. 
By the definition of $a_c$, for every $t\leq t_0$ we have
\begin{equation}\label{acmin}
a_c\geq \frac{t-n\hat{\pi}(t)}{1-\hat{\pi}(t)}.
\end{equation}

We will indeed show by induction that for every $0\le t\leq t_0$ we have that $|\infec(t)|\geq t+(1-\hat{\pi}(t))\tendstoinfty/2$. Note that this implies that $|\infec(t)|>t$ for every $0\le t\leq t_0$ and thus $T> t_0$. 
Clearly $|\infec(0)|=a_c+\tendstoinfty\geq \tendstoinfty/2$. Now assume that for some $0\le t\leq t_0$ we have for every $\tau< t$ that $|\infec(\tau)|\geq \tau+(1-\hat{\pi}(\tau))\tendstoinfty/2$. This implies that $T>t-1$ and thus $\pi(t)=\hat{\pi}(t)$. 
Therefore we obtain
\begin{align*}
|\infec(t)|&\stackrel{\eqref{marinfec}}{=}a+(1-\pi(t))M(t,n-a)+(n-a)\pi(t)\\
&\stackrel{\eqref{boundMti}}{>} (1-\hat{\pi}(t))(a_c+\tendstoinfty)+n\hat{\pi}(t)-(1-\hat{\pi}(t))\tendstoinfty/2\\
&\stackrel{\eqref{acmin}}{\geq} t+(1-\hat{\pi}(t))\tendstoinfty/2.
\end{align*}

For every $t_1\geq t \geq t_0$ we have $|\infec(t)|\geq |\infec(t_0)|\geq t_0 + (1-\hat{\pi}(t_0))\tendstoinfty/2$, and thus 
$$|\infec(t)|\geq t+(t_0-t_1) + (1-\hat{\pi}(t_0))\tendstoinfty/2= t+ (1-2\hat{\pi}(t_0))\tendstoinfty/4,$$
which implies the result.
\end{proof}

Next we shall establish the size of the giant component in the set of vertices which have at least $r-1$ neighbours in $\checked(t_1)$. For this we first need to establish the total number of vertices which have at least $r-1$ neighbours in $\checked(t_1)$.

\begin{lemma}\label{almostinfected1}
Conditional on $T>t_1$ and $|\infec(t_1)|\geq t_1+(1-2\hat{\pi}(t_0))\tendstoinfty/4$, we have that for any set $A\subset [n]\backslash \checked(t_1)$ satisfying $|A|=(1-2\hat{\pi}(t_0))\tendstoinfty/4$, with probability at least $$1-\exp(-\omega(\tendstoinfty^2/t_0))$$ 
the set of vertices in $[n]\backslash (\checked(t_1)\cup A)$ with at least $r-1$ neighbours in $\checked(t_1)$ is at least
$$\left(1+\frac{3}{4}\error+\frac{(r-1)\tendstoinfty}{4 t_0}\right)p^{-1}.$$ 
\end{lemma}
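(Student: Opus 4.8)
The plan is to condition on the exploration of Section~\ref{sec:setup} up to the end of step $t_1$ and show that, conditionally, the number of not-yet-infected vertices with exactly $r-1$ neighbours in $\checked(t_1)$ is a binomial random variable whose mean exceeds the claimed lower bound $L:=\bigl(1+\tfrac34\error+\tfrac{(r-1)\tendstoinfty}{4t_0}\bigr)p^{-1}$ by $\Omega(\error/p)$, so that the lower tail of Theorem~\ref{chernoff} finishes the proof. Condition on $\filter{t_1,n-a}$; since $T>t_1$ we have $|\checked(t_1)|=t_1$, and $\checked(t_1)\subseteq\infec(t_1)$ together with all infection statuses are determined. The structural point — the same coarseness of the filtration that makes $M(\cdot,\cdot)$ a martingale, cf.\ \eqref{condexpX} — is that $\filter{t_1,n-a}$ does \emph{not} reveal the exact number $D_v$ of neighbours in $\checked(t_1)$ of a vertex $v\notin\infec(t_1)$: conditionally, each such $D_v$ is distributed as $\mathrm{Bin}(t_1,p)$ conditioned on being at most $r-1$, and these are independent over $v\in[n]\setminus\infec(t_1)$.

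Write $N$ for the quantity to be bounded and set $\beta:=|A|=(1-2\hat\pi(t_0))\tendstoinfty/4$. Every vertex of $\infec(t_1)\setminus\checked(t_1)$ has at least $r\ge r-1$ neighbours in $\checked(t_1)$, so $N\ge|\infec(t_1)|-t_1-\beta$; if this already reaches $L$ we are done. Otherwise $|\infec(t_1)|\le L+t_1+\beta=O(p^{-1})$ (since $t_1=O(t_0)$ and $\beta=O(\tendstoinfty)$ are both $o(p^{-1})$), so $m:=|[n]\setminus\infec(t_1)|\ge n-O(p^{-1})$, and $N\ge N'-\beta$, where $N':=|\{v\in[n]\setminus\infec(t_1):D_v=r-1\}|$ is conditionally $\mathrm{Bin}(m,q)$ with $q=\mathbb{P}[\mathrm{Bin}(t_1,p)=r-1\mid\mathrm{Bin}(t_1,p)\le r-1]\ge\binom{t_1}{r-1}p^{r-1}(1-p)^{t_1-r+1}$.

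Now estimate $\mu:=\mathbb{E}[N']=mq$. From the definitions of $\error,t_0$ and \eqref{t0small}, \eqref{probt0} one checks that $t_1p$, $1/t_1$, $\hat\pi(t_1)$ and $1/(np)$ are all $o(\error)$, hence $q\ge(1-o(\error))\,t_1^{r-1}p^{r-1}/(r-1)!$; by Bernoulli's inequality $t_1^{r-1}=t_0^{r-1}\bigl(1+\tfrac{\tendstoinfty}{4t_0}\bigr)^{r-1}\ge t_0^{r-1}\bigl(1+\tfrac{(r-1)\tendstoinfty}{4t_0}\bigr)$, and by the definition of $t_0$, $t_0^{r-1}p^{r-1}/(r-1)!=(1+\error)/(np)$. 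Combining these with $m\ge n-O(p^{-1})$ (so $mq\ge nq(1-o(\error))$) gives
\[
\mu\ \ge\ (1-o(\error))(1+\error)\Bigl(1+\tfrac{(r-1)\tendstoinfty}{4t_0}\Bigr)p^{-1}\ \ge\ L+\Omega(\error/p),
\]
and also $\mu=\Theta(p^{-1})$. Since $\tendstoinfty\le t_0-a_c=O(t_0)$ while $\error/p=\omega(t_0)$ (again from the definitions of $\error,t_0$), we have $\beta=O(\tendstoinfty)=o(\error/p)$, so $\mu$ beats $L+\beta$ by $\Omega(\error/p)$; Theorem~\ref{chernoff} then yields
\[
\mathbb{P}[N<L]\ \le\ \mathbb{P}[N'<L+\beta]\ \le\ \exp\!\bigl(-\Omega((\error/p)^2/\mu)\bigr)\ =\ \exp\bigl(-\Omega(\error^2/p)\bigr).
\]
Finally $\error^2/p=\omega(\tendstoinfty^2/t_0)$ (once more from the definitions, using $\tendstoinfty^2/t_0\le t_0$), so this is $\exp(-\omega(\tendstoinfty^2/t_0))$; as the bound on $\mathbb{P}[N<L\mid\filter{t_1,n-a}]$ holds for every value of $\filter{t_1,n-a}$ compatible with the hypothesis (trivially in the first case, via Chernoff in the second), taking expectations over $\filter{t_1,n-a}$ conditioned on the hypothesis proves the lemma.

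I expect the main obstacle to be the structural claim in the first paragraph: justifying carefully that conditioning on $\filter{t_1,n-a}$ leaves the neighbourhoods in $\checked(t_1)$ of the still-uninfected vertices ``fresh'' in the stated conditional sense. This is exactly where one uses that the filtration of Section~\ref{sec:setup} records only the aggregated indicators $X(\tau,\cdot)$ and not the exact degrees — the reason for working with that exploration rather than revealing edges greedily — and it must be argued that the event generating $\filter{t_1,n-a}$ constrains the neighbourhood of each uninfected vertex only through ``at most $r-1$ neighbours in $\checked(t_1)$'', so that these residual neighbourhoods are conditionally independent truncated binomials. Everything after that is routine, but one must keep each auxiliary error term $o(\error)$ rather than merely $o(1)$, since the argument lives entirely on the $\Theta(\error/p)$ of slack in $\mu$.
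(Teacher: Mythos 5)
Your proposal is correct and takes essentially the same approach as the paper: both condition on the exploration up to step $t_1$, identify that the residual neighbourhoods in $\checked(t_1)$ of the still-uninfected vertices are independent truncated binomials, and then apply a Chernoff lower-tail bound with $\Theta(\error/p)$ of slack. The structural claim you correctly flag as the crux is precisely what the paper establishes via the restricted process $\infec',\checked'$ and the factorisation $\{\checked(t_1)=Z',\infec(t_1)=A'\}=\{\checked'(t_1)=Z',\infec'(t_1)=A'\}\cap\mathcal{G}$; your more direct filtration argument reaches the same conclusion.
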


\begin{proof}
Let $\mathcal{E}$ be the event that the set of vertices in $[n]\backslash (\checked(t_1)\cup A)$ with at least $r-1$ neighbours in $\checked(t_1)$ is at least
$\left(1+\frac{3}{4}\error+\frac{(r-1)\tendstoinfty}{4 t_0}\right)p^{-1}.$ 
Since $A(0)$ contains the smallest vertices in the graph and in every step of the process the smallest vertex from $\infec(t)\backslash\checked(t)$ is selected we have that $\infec(0)=\checked(a)\subset\checked(t_1)$. 
By definition $\checked(t_1)\subset \infec(t_1)$. 
Therefore the result follows once we show that for every $\checked',\infec'$ satisfying $\infec(0)\subset\checked'\subset \infec'$, $|\checked'|=t_1$ and $|\infec'|\geq t_1+(1-2\hat{\pi}(t_0))\tendstoinfty/4$ the probability that $\mathcal{E}$ holds conditional on the event $\checked(t_1)=\checked'$ and $\infec(t_1)=\infec'$  is at least
$$1-\exp(-\omega(\tendstoinfty^2/t_0)).$$

For the remainder of the proof fix $Z'$ and $A'$ which satisfy the previous conditions. 
Define bootstrap percolation restricted to $A'$ as follows. Let $\infec'(0)=\infec(0)$ and $\checked'(0)=\emptyset$. For each step $t\in\mathbb{N}$, if $\infec'(t-1)\backslash \checked'(t-1)\neq\emptyset$, then let $U'(t)=\{u'(t)\}$, where $u'(t)$ is the smallest vertex in $\infec'(t-1)\backslash \checked'(t-1)$, otherwise set $U'(t)=\emptyset$. Set $\checked'(t):=\checked'(t-1)\cup U'(t)$. In addition $\infec'(t)$ contains the set of vertices in $A'$ which have at least $r$ neighbours in $\checked'(t)$ and the set of vertices in $\infec'(0)$. 

Let $\mathcal{G}$ be the event that every vertex outside of $A'$ has less than $r$ neighbours in $Z'$. We will show that the event $\checked(t_1)=Z'\cap\infec(t_1)=A'$ is equivalent to the event $\checked'(t_1)=Z'\cap\infec'(t_1)=A'\cap\mathcal{G}$. 
Note that if $\checked(t_1)=Z'\cap\infec(t_1)=A'$ holds, then any vertex in $[n]\backslash A'$, as it is not contained in $\infec(t_1)=A'$, can have at most $r-1$ neighbours in $Z'=\checked(t_1)$. 
Therefore the event $\checked(t_1)=Z'\cap\infec(t_1)=A'$ is contained in $\mathcal{G}$.

Next we show that if $\mathcal{G}$ holds, then for every $t\leq t_1$ we have that $\checked(t)=\checked'(t)$ and $\infec(t)=\infec'(t)$. 
Clearly this holds for $t=0$ and now assume that it holds until step $t-1$. Recall that $\checked(t)=\checked(t-1)\cup U(t)$ and $\checked'(t)=\checked'(t-1)\cup U'(t)$. 
According to our assumption $\checked(t-1)=\checked'(t-1)$, so we only need to show that $U(t)=U'(t)$.  Recall that $U(t)$ and $U'(t)$ are the smallest vertices in the set $\infec(t-1)\backslash \checked(t-1)$ and $\infec'(t-1)\backslash \checked'(t-1)$ respectively. Because $\infec(t-1)\backslash \checked(t-1)=\infec'(t-1)\backslash \checked'(t-1)$,  we have $U(t)=U'(t)$, which implies $\checked(t)=\checked'(t)$. It only remains to show that $\infec(t)=\infec'(t)$. 
Since $\checked(t)\subset\checked(t_1)=Z'$ and $\mathcal{G}$ holds, we have that any vertex in $\infec(t)$ must be in $A'$, thus $\infec(t)$ is the set of vertices in $A'$ with at least $r$ neighbours in $\checked(t)$ and $\infec(0)$, matching the definition of $\infec'(t)$.

Let $X_v$ be the indicator random variable that a vertex $v\in [n] \backslash (A \cup Z')$ has at least $r-1$ neighbours in $Z'$ and set $X:=\sum_{v\in [n] \backslash (A\cup Z')}X_v$.
For a vertex $v\in A'$ we have
$$\mathbb{P}[X_v=1|\infec(t_1)=A',\checked(t_1)=Z']=1.$$
On the other hand,  when $v\not\in A'$, since the event $\checked(t_1)=Z'\cap\infec(t_1)=A'$ is equivalent to the event $\checked'(t_1)=Z'\cap\infec'(t_1)=A'\cap\mathcal{G}$, we have
\begin{align*}
\mathbb{P}[X_v=1|\infec(t_1)=A',\checked(t_1)=Z']&=\mathbb{P}[X_v=1|\infec'(t_1)=A',\checked'(t_1)=Z',\mathcal{G}].
\end{align*}

Note that $\infec'(t_1)$ and $\checked'(t_1)$ depend only on edges spanned by $A'$, while the events $X_v=1$ and $\mathcal{G}$ depend only on edges with exactly one end in $A'$. 
Therefore the event $\checked'(t_1)=Z'\cap \infec'(t_1)=A'$ is independent of the event $X_v=1\cap\mathcal{G}$ and the event $\mathcal{G}$ as well. 
Therefore
\begin{align*}
\mathbb{P}[X_v=1|\infec'(t_1)=A',\checked'(t_1)=Z',\mathcal{G}]
&=\mathbb{P}[X_v=1|\mathcal{G}].
\end{align*}
Denote by $d_{Z'}(u)$ the number of neighbours of $u$ in $Z'$. We have
$$\mathcal{G}=\left\{\forall u\in [n]\backslash A':d_{Z'}(u)<r\right\}.$$
Note that the event $d_{Z'}(u)$ depends only on edges between $u$ and $Z'$ and the event $X_v=1$ depends only on edges between $v$ and $Z'$. Since these events are independent unless $v=u$, we have
$$\mathbb{P}[X_v=1|\mathcal{G}]=\mathbb{P}[X_v=1|d_{Z'}(v)<r]\geq \mathbb{P}[X_v=1,d_{Z'}(v)<r].$$

In addition, if $v$ has at least $r-1$ neighbours in $Z'$ and $d_{Z'}(v)<r$, then $v$ has exactly $r-1$ neighbours in $Z'$ and thus for $v\not\in A'$ we have
\begin{align*}
\mathbb{P}[X_v=1,d_{Z'}(v)<r]&= \binom{t_1}{r-1}p^{r-1}(1-p)^{t_1-r+1}\\
&=(1+O(t_1^{-1})) \frac{t_1^{r-1}}{(r-1)!}p^{r-1}(1-p)^{t_1-r+1}.
\end{align*}

Since $t_1=O(t_0)$, it follows from \eqref{t0small} that $(1-p)^{t_1-r+1}=1+O(t_0 p)$ and thus

\begin{align*}
\mathbb{P}[X_v=1,d_{Z'}(v)<r]&\geq (1+O(t_1^{-1})+O(t_0p))\frac{t_0^{r-1}(1+(r-1)\tendstoinfty/(4t_0))}{(r-1)!}p^{r-1}\\
&\geq  (1+O(t_1^{-1})+O(t_0p))\left( \frac{1+\error}{np}+\frac{(r-1)\tendstoinfty}{4n p t_0}\right).
\end{align*}

Since $t_1^{-1}=O(t_0^{-1})=o(\error)$, $t_0p=o(\error)$ and $\tendstoinfty=O(t_0)$, we have
$$\mathbb{P}[X_v=1|\infec(t_1)=A',\checked(t_1)=Z']\geq \frac{1+\error+o(\error)}{np}+\frac{(r-1)\tendstoinfty}{4np t_0}.$$

Conditional on $\checked'(t_1)=Z'$, $\infec'(t_1)=A'$ and $\mathcal{G}$, the set of random variables $\{X_v|v\in [n]\backslash A'\}$ are mutually independent. Also conditional on these events we have that $X_v=1$ when $v\in A'\backslash (A\cup Z')$ and thus the set of random variables $\{X_v|v\in [n]\backslash (Z' \cup A)\}$ are also mutually independent. Therefore $X$ stochastically dominates the binomial random variable 
$$\hat{X}=\mathrm{Bin}\left(n-t_1-|A|,\frac{1+\error+o(\error)}{np}+\frac{(r-1)\tendstoinfty}{4np t_0}\right).$$
Since $(t_1+|A|)/n=O(t_1/n)=O((np)^{-r/(r-1)})=o(\error)$, we have
$$\mathbb{E}[\hat{X}]=\frac{1+\error+o(\error)}{p}+\frac{(r-1)\tendstoinfty}{4p t_0}.$$
Recall that $\tendstoinfty\leq t_0$ and thus $\mathbb{E}[\hat{X}]\leq r p^{-1}$.
This and Theorem \ref{chernoff} imply
\begin{align*}
\mathbb{P}\left[X-\mathbb{E}(\hat{X})\leq -\frac{\tendstoinfty}{(t_0p)^{2/3}}\right]
&\leq \exp\left(-\frac{1}{2}\frac{\tendstoinfty^2}{(t_0 p)^{4/3}} \frac{1}{rp^{-1}}\right)
 = \exp\left(-\Omega\left(\frac{\tendstoinfty^2}{t_0 (t_0 p)^{1/3}}\right)\right)
 \stackrel{\eqref{t0small}}{=}\exp\left(-\omega\left(\frac{\tendstoinfty^2}{t_0 }\right)\right).
\end{align*}
We only need to show that
$$\mathbb{E}[\hat{X}]-\frac{\tendstoinfty}{(t_0p)^{2/3}}=\frac{1+\error+o(\error)}{p}+\frac{(r-1)\tendstoinfty}{4pt_0}-\frac{\tendstoinfty}{(t_0p)^{2/3}}\geq \left(1+\frac{3\error}{4}+\frac{(r-1)\tendstoinfty}{4 t_0}\right)p^{-1}.$$
The result follows from the fact that 
$$\frac{\tendstoinfty}{(t_0p)^{2/3}}=\frac{\tendstoinfty}{t_0p}(t_0p)^{1/3}=O((t_0p)^{1/3}p^{-1})=O((np)^{-1/(3(r-1))}p^{-1})=o(\error p^{-1}).$$
\end{proof}

Next we establish the size of a giant component in a set of size $\left(1+\frac{3}{4}\error+\frac{(r-1)\tendstoinfty}{4 t_0}\right)p^{-1}$.

\begin{lemma}\label{giant}
With probability $1-\exp(-\omega(\tendstoinfty^2/t_0))$
the binomial random graph 
$$G\left(\left(1+\frac{3}{4}\error+\frac{(r-1)\tendstoinfty}{4 t_0}\right)p^{-1},p \right)$$
contains a component of size at least
$$\left(\frac{\error}{4}+\frac{(r-1)\tendstoinfty}{2 t_0+(r-1)\tendstoinfty}\right)p^{-1}.$$
\end{lemma}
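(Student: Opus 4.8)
The plan is to apply Theorem~\ref{lingiant} directly. Write $\varepsilon:=\frac34\error+\frac{(r-1)\tendstoinfty}{4t_0}$ and $n':=(1+\varepsilon)p^{-1}$ (rounding $n'$ to an integer is harmless), so that the graph in question is $G(n',(1+\varepsilon)/n')$. First I would verify the two hypotheses of Theorem~\ref{lingiant}. For $\varepsilon=O(1)$: $\error=o(1)$, and $\frac{(r-1)\tendstoinfty}{4t_0}\le\frac{(r-1)(t_0-a_c)}{4t_0}=\frac{r-1}{4r}+o(1)\le\frac14+o(1)$, using $\tendstoinfty\le t_0-a_c$ together with the stated asymptotic $a_c=(1+o(1))(1-1/r)t_0$; thus $\varepsilon\le\frac14+o(1)$, a bound I use below. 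For $\varepsilon^3n'=\omega(1)$: since $\varepsilon\ge\frac34\error\ge\frac34(np)^{-1/(4(r-1))}$ and $n'\ge p^{-1}$, a short computation using $p=o(n^{-1/r})$ gives $\varepsilon^3n'=\Omega\big((np)^{-3/(4(r-1))}p^{-1}\big)=\Omega(n^{1/(4r)})=\omega(1)$.

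The next step is a non-asymptotic lower bound on $\rho$. The function $g(x):=e^{-(1+\varepsilon)x}-(1-x)$ satisfies $g(0)=0$, is first decreasing then increasing (since $g''>0$ and $g'(0)=-\varepsilon<0$) and tends to $+\infty$, hence it is negative exactly on $(0,\rho)$, so $\rho\ge x$ whenever $g(x)\le0$. Taking $x=\frac{2\varepsilon}{(1+\varepsilon)^2}$ and using $e^{-y}\le1-y+\frac{y^2}{2}$ gives $g(x)\le0$, so $\rho\ge\frac{2\varepsilon}{(1+\varepsilon)^2}$ and therefore $\rho n'=\rho(1+\varepsilon)p^{-1}\ge\frac{2\varepsilon}{1+\varepsilon}p^{-1}$. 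Now I would compare $\frac{2\varepsilon}{1+\varepsilon}$ with the target coefficient. Writing $\beta:=\frac{(r-1)\tendstoinfty}{4t_0}$, so that $\varepsilon=\beta+\frac34\error$ and the target coefficient $\frac{\error}{4}+\frac{(r-1)\tendstoinfty}{2t_0+(r-1)\tendstoinfty}$ equals $\frac{\error}{4}+\frac{2\beta}{1+2\beta}$, and using that $f(x):=\frac{2x}{1+x}$ is increasing with $f'(x)=\frac{2}{(1+x)^2}$, one has $f(\varepsilon)-f(\beta)\ge\frac34\error\cdot\frac{2}{(1+\varepsilon)^2}$ and $f(\beta)-\frac{2\beta}{1+2\beta}=\frac{2\beta^2}{(1+\beta)(1+2\beta)}\ge0$, whence
$$\frac{2\varepsilon}{1+\varepsilon}\ \ge\ \frac{2\beta}{1+2\beta}+\frac{3\error}{2(1+\varepsilon)^2}\ \ge\ \frac{\error}{4}+\frac{2\beta}{1+2\beta}+c\,\error$$
for a constant $c>0$, since $\frac{3}{2(1+\varepsilon)^2}-\frac14>0$ by $\varepsilon\le\frac14+o(1)$.

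Finally I would apply Theorem~\ref{lingiant} with $\gamma:=c'\error$ for a constant $c'>0$ small enough that $c'(1+\varepsilon)\le c$ (so the $-\gamma n'$ loss is absorbed into the $c\,\error\,p^{-1}$ slack above) and that $\gamma<\rho$ (possible since $\rho=\Omega(\error)$). This produces a component of size at least $\rho n'-\gamma n'\ge\big(\frac{\error}{4}+\frac{(r-1)\tendstoinfty}{2t_0+(r-1)\tendstoinfty}\big)p^{-1}$ with probability $1-\exp(-\Omega(\gamma^2\varepsilon n'))$. It then remains to check $\gamma^2\varepsilon n'=\omega(\tendstoinfty^2/t_0)$: from $\varepsilon\ge\beta=\Theta(\tendstoinfty/t_0)$ and $n'\ge p^{-1}$ we get $\gamma^2\varepsilon n'=\Omega\big(\error^2\tendstoinfty/(t_0p)\big)$, and $\error^2/(p\tendstoinfty)\ge\error^2/(pt_0)=\Omega\big((np)^{1/(2(r-1))}\big)=\omega(1)$ because $\error^2\ge(np)^{-1/(2(r-1))}$, $\tendstoinfty\le t_0$, and $pt_0=O((np)^{-1/(r-1)})$ by~\eqref{t0small}. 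Hence the failure probability is $\exp(-\omega(\tendstoinfty^2/t_0))$ (which is genuinely small since $\tendstoinfty^2/t_0=\omega(1)$, as $\tendstoinfty=\omega(\sqrt{a_c})$ and $a_c=\Theta(t_0)$), as claimed. The main obstacle is calibrating $\gamma$ to be of exact order $\error$: it must be large enough that the Bollob\'as--Riordan error term beats $\exp(-\omega(\tendstoinfty^2/t_0))$ — which is what forces the inequality $\error^2=\omega(p\tendstoinfty)$ — yet small enough that $\rho n'-\gamma n'$ still clears the target; the room for this comes precisely from the gap between the coefficient $\frac34$ of $\error$ inside $\varepsilon$ and the coefficient $\frac14$ of $\error$ in the target.
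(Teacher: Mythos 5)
Your proof is correct and follows essentially the same route as the paper: verify the hypotheses of Theorem~\ref{lingiant}, lower-bound $\rho$ by a second-order Taylor estimate of the fixed-point equation (the paper gets $\rho>2\varepsilon/(1+2\varepsilon)$, you get $\rho\ge 2\varepsilon/(1+\varepsilon)^2$ and recover the difference from the $(1+\varepsilon)$ factor in $n'$), and take $\gamma=\Theta(\error)$ so that the loss is absorbed by the gap between the $\tfrac34\error$ in $\varepsilon$ and the $\tfrac14\error$ in the target while $\gamma^2\varepsilon n'=\omega(\tendstoinfty^2/t_0)$. Your calibration of $\gamma$ and the derivative-based comparison with the target are slightly more explicit than the paper's, but the argument is the same.
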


\begin{proof}
Recall that $\error\geq (np)^{-1/(4(r-1))}$ and thus

\begin{equation}\label{error}
\error^3p^{-1}\geq\left(\frac{1}{np}\right)^{3/(4(r-1))}p^{-1}
=\left(\frac{1}{np^r}\right)^{3/(4(r-1))}p^{-1/4}
=\Omega \left(\left(t_0^{3/4}p^{-1/4}\right)\right)
\stackrel{\eqref{t0small}}{=}\omega(t_0)
=\omega(1).
\end{equation}
Therefore we are in the range where Theorem \ref{lingiant} is applicable. 
Note that for any $\varepsilon>0$ and any \linebreak[4]
$x<(2\varepsilon)/(1+2\varepsilon)$ we have
$$\exp\left((1+\varepsilon)x\right)>\sum_{k=0}^2 \frac{((1+\varepsilon)x)^k}{k!}>1+x+x^2 \frac{1}{1-x}=\frac{1}{1-x}.$$
Thus the unique positive solution $\rho$  of the equation $1-\rho=\exp(-\rho(1+3\error/4+(r-1)\tendstoinfty/(4t_0))$ satisfies 
$$\rho> \frac{6\error/4+2(r-1)\tendstoinfty/(4t_0)}{1+6\error/4+2(r-1)\tendstoinfty/(4t_0)}.$$
Since $\tendstoinfty\leq t_0-a_c=(1+o(1))t_0/r$, we have $(r-1)\tendstoinfty/(4t_0)\leq (r-1)/(4r)+o(1)<1/4$. This together with $\error=o(1)$ implies that
\begin{align}\label{rhodef}
\rho > \frac{6\error/4+2(r-1)\tendstoinfty/(4t_0)}{1+6\error/4+2(r-1)\tendstoinfty/(4t_0)}\geq \frac{\error}{2}+\frac{(r-1)\tendstoinfty}{2(t_0+(r-1)\tendstoinfty/2)}.
\end{align}

Now Theorem \ref{lingiant} implies that with probability 
$$1-\exp(-\Omega(\error^3p^{-1}))\stackrel{\eqref{error}}{=} 1-\exp\left(-\omega(t_0)\right) \stackrel{\tendstoinfty\leq t_0}{=} 1-\exp(-\omega(\tendstoinfty^2/t_0))$$
there is a component of size at least 
$$
\left(\rho-\frac{\delta}{4}\right)p^{-1} 
\stackrel{\eqref{rhodef}}{\geq}  \left(\frac{\error}{4}+\frac{(r-1)\tendstoinfty}{2t_0+(r-1)\tendstoinfty}\right)p^{-1},
$$
completing the proof. 
\end{proof}

In the following two lemmas we estimate the number of vertices with at least $r$ neighbours in a set of size $o(p^{-1})$ and a set of size $\omega(p^{-1})$ separately, because estimating the probability that a vertex has at least $r$ neighbours in such sets differs significantly.
\begin{lemma}\label{Chernoff1}
Let $B,W\subset [n]$ in $G(n,p)$ satisfy $|B|= (np)^{-1/(4(r-1))} p^{-1}/4$ and $|W|=o(n)$. With probability  $1-\exp(-\omega(p^{-1}))$ the number of vertices in $[n]\backslash(B\cup W)$ with at least $r$ neighbours in $B$ is at least $(np)^{1/(4(r-1))}p^{-1}.$
\end{lemma}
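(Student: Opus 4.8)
The plan is to recognise the count in question as a binomial random variable and apply the lower-tail Chernoff bound of Theorem~\ref{chernoff}; the only genuine work is the exponent bookkeeping showing that its expectation comfortably exceeds both the target $(np)^{1/(4(r-1))}p^{-1}$ and $p^{-1}$.

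Write $b:=|B|=\tfrac14(np)^{-1/(4(r-1))}p^{-1}$, so that $bp=\tfrac14(np)^{-1/(4(r-1))}=o(1)$ because $np=\omega(1)$; in particular $b=o(p^{-1})=o(n)$. For each $v\in[n]\setminus(B\cup W)$ let $Y_v$ be the indicator of the event that $v$ has at least $r$ neighbours in $B$, and put $Y:=\sum_{v\in[n]\setminus(B\cup W)}Y_v$. For distinct such $v$ the events $\{Y_v=1\}$ depend on pairwise disjoint sets of potential edges --- those joining $v$ to $B$ --- so the $Y_v$ are mutually independent, and each is a Bernoulli variable with the common parameter $q:=\mathbb P[\mathrm{Bin}(b,p)\ge r]$. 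Exactly as in the estimate of $\hat\pi(t_0)$ leading to \eqref{probt0}, since $bp=o(1)$ the term $j=r$ dominates the sum defining $q$, and $q=(1+o(1))\binom{b}{r}p^{r}=(1+o(1))\tfrac{(bp)^{r}}{r!}=(1+o(1))\tfrac{(np)^{-r/(4(r-1))}}{4^{r}r!}$. Since $|B\cup W|=o(n)$, the variable $Y$ is binomial with $(1-o(1))n$ trials and success probability $q$, hence $\mathbb E[Y]=(1+o(1))\tfrac{n\,(np)^{-r/(4(r-1))}}{4^{r}r!}$.

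Next I would carry out the two size comparisons by writing every quantity as a power of $np$. Since $(np)^{1/(4(r-1))}p^{-1}=n\,(np)^{1/(4(r-1))-1}$,
\[
\frac{\mathbb E[Y]}{(np)^{1/(4(r-1))}p^{-1}}=(1+o(1))\frac{1}{4^{r}r!}\,(np)^{(3r-5)/(4(r-1))}\longrightarrow\infty,
\]
because $3r-5\ge1>0$ for $r\ge2$; likewise $\mathbb E[Y]\,p=(1+o(1))\tfrac{1}{4^{r}r!}(np)^{(3r-4)/(4(r-1))}\to\infty$ since $3r-4\ge2>0$, so $\mathbb E[Y]=\omega(p^{-1})$. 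In particular the target count is eventually $o(\mathbb E[Y])$. Applying the lower-tail bound of Theorem~\ref{chernoff} with $\lambda:=\mathbb E[Y]-(np)^{1/(4(r-1))}p^{-1}=(1-o(1))\mathbb E[Y]>0$ then gives
\[
\mathbb P\big[Y<(np)^{1/(4(r-1))}p^{-1}\big]=\mathbb P\big[Y\le\mathbb E[Y]-\lambda\big]\le\exp\!\Big(-\tfrac{\lambda^{2}}{2\mathbb E[Y]}\Big)\le\exp\!\big(-(\tfrac12-o(1))\mathbb E[Y]\big)=\exp(-\omega(p^{-1})),
\]
where the last equality uses $\mathbb E[Y]=\omega(p^{-1})$; this is exactly the claimed bound.

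There is no substantial obstacle here: the argument is a routine Chernoff estimate on a binomial variable. The only points demanding care are that $|B|=o(n)$ and $|W|=o(n)$, so that $Y$ genuinely is a sum of $(1-o(1))n$ independent indicators, and the exponent arithmetic of the previous paragraph, which relies on $r\ge2$ to keep both $3r-5$ and $3r-4$ strictly positive --- precisely what forces $\mathbb E[Y]$ to dominate the target count and to be $\omega(p^{-1})$.
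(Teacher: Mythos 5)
Your proposal is correct and follows essentially the same route as the paper: identify $Y$ as a sum of $(1-o(1))n$ independent indicators with success probability $(1+o(1))\binom{|B|}{r}p^r$, check via the exponent arithmetic (the paper's inequality $1-\tfrac{r}{4(r-1)}\geq\tfrac{1}{2(r-1)}$ plays the role of your $3r-5\geq 1$ computation) that $\mathbb{E}[Y]$ dominates both the target and $p^{-1}$, and conclude with the lower-tail Chernoff bound at $\lambda=(1-o(1))\mathbb{E}[Y]$. No gaps.
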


\begin{proof}
Let $Y_v$ be the indicator random variable that a vertex $v\in [n] \backslash (B \cup W)$ has at least $r$ neighbours in $B$ and set $Y=\sum_{v\in [n] \backslash (B \cup W)}Y_v$.
Since $|B|p=o(1)$, we have

\begin{align*}
\mathbb{P}[Y_v=1]
&=\sum_{j=r}^{|B|} \binom{|B|}{j}p^{j}(1-p)^{|B|-j}
=(1+o(1)) \binom{|B|}{r}p^r(1-p)^{|B|-r}\\
&=(1+o(1)) \frac{|B|^r}{r!}p^r
=(1+o(1)) \frac{(np)^{-r/(4(r-1))}}{4^r r!}.
\end{align*}

Note that
\begin{equation}\label{gain}
1-\frac{r}{4(r-1)}=\frac{3r-4}{4(r-1)}\stackrel{r\geq 2}{\geq} \frac{1}{2(r-1)}.
\end{equation}

Since $|[n]\backslash (B \cup W)|=(1+o(1))n$, we have
\begin{equation}\label{expectationY}
\mathbb{E}[Y]=(1+o(1))n \frac{(np)^{-r/(4(r-1))}}{4^r r!}\stackrel{\eqref{gain}}{\geq}(1+o(1))\frac{(np)^{1/(2(r-1))}}{4^r r!}p^{-1}.
\end{equation}

Furthermore the set of random variables $\{Y_v|v\in [n]\backslash (B \cup W)\}$ are mutually independent. Therefore, by Theorem \ref{chernoff} we have 
\begin{align*}
\mathbb{P}\left[Y\leq (np)^{1/(4(r-1))}p^{-1}\right]&\leq \exp\left(-\frac{(1+o(1))(\mathbb{E}[Y])^2}{2\mathbb{E}[Y]}\right)\\
&\stackrel{\eqref{expectationY}}{\leq}\exp\left(-\Omega\left((np)^{1/(2(r-1))}p^{-1}\right)\right)
\stackrel{np=\omega(1)}{=} \exp\left(-\omega(p^{-1})\right).
\end{align*}
\end{proof}

\begin{lemma}\label{Chernoff2}
Let $C,W\subset [n]$ in $G(n,p)$ satisfy $|C|= (np)^{1/(4(r-1))}p^{-1}$ and $|W|=o(n)$. Then with probability $1-\exp(-\Omega(p^{-1}))$ the number of vertices in $[n]\backslash (C \cup W)$ with at least $r$ neighbours in $C$ is at least $(1+o(1))n$.
\end{lemma}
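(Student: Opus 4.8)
The plan is to mirror the argument of Lemma \ref{Chernoff1}, with the crucial difference that now $|C|p=(np)^{1/(4(r-1))}=\omega(1)$, so a typical vertex has \emph{many} neighbours in $C$ and hence at least $r$ of them with probability $1-o(1)$ rather than $o(1)$. For $v\in[n]\setminus(C\cup W)$ let $Y_v$ be the indicator that $v$ has at least $r$ neighbours in $C$, and put $Y=\sum_{v\in[n]\setminus(C\cup W)}Y_v$. Then $\mathbb{P}[Y_v=1]=\mathbb{P}[\mathrm{Bin}(|C|,p)\geq r]$, and since the mean $|C|p$ tends to infinity, applying the lower tail in Theorem \ref{chernoff} to $\mathrm{Bin}(|C|,p)$ with deviation $|C|p/2$ (which eventually exceeds $r$) gives $\mathbb{P}[Y_v=0]\leq\mathbb{P}[\mathrm{Bin}(|C|,p)\leq |C|p/2]\leq\exp(-|C|p/8)=\exp\big(-\tfrac18(np)^{1/(4(r-1))}\big)=o(1)$.

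Next I would control $\mathbb{E}[Y]$. Since $1/(4(r-1))<1$ for $r\geq 2$ we have $|C|/n=(np)^{1/(4(r-1))-1}=o(1)$, so $|C|=o(n)$; together with $|W|=o(n)$ this yields $|[n]\setminus(C\cup W)|=(1+o(1))n$, and therefore $\mathbb{E}[Y]=(1+o(1))n\,(1-o(1))=(1+o(1))n$. The variables $\{Y_v\}$ are mutually independent, because $Y_v$ depends only on the edges between $v$ and $C$ (disjoint edge sets for distinct $v$), so Theorem \ref{chernoff} applies to $Y$.

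Finally, I would apply the Chernoff lower tail to $Y$ with $\lambda=\sqrt{n/p}$. On the one hand
$$\frac{\lambda^2}{2\mathbb{E}[Y]}=\frac{n/p}{2(1+o(1))n}=\frac{1+o(1)}{2p}=\Omega(p^{-1}),$$
so $\mathbb{P}[Y\leq\mathbb{E}[Y]-\lambda]\leq\exp(-\Omega(p^{-1}))$. On the other hand $\lambda/n=1/\sqrt{np}=o(1)$, so on the complementary event $Y\geq\mathbb{E}[Y]-\lambda=(1+o(1))n-o(n)=(1+o(1))n$; combined with the trivial bound $Y\leq n$ this gives $Y=(1+o(1))n$ with probability $1-\exp(-\Omega(p^{-1}))$, which is the claim.

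I do not expect a genuine obstacle here — the lemma is routine — but the one point that must be gotten right is the choice of the deviation $\lambda$: it has to be large enough that $\lambda^2=\Omega(n/p)$ (so the failure probability is $\exp(-\Omega(p^{-1}))$) and small enough that $\lambda=o(n)$ (so that $\mathbb{E}[Y]-\lambda$ is still $(1+o(1))n$). Both requirements boil down to $1/p=o(n)$, i.e.\ to the standing hypothesis $p=\omega(n^{-1})$, and $\lambda=\sqrt{n/p}$ is a convenient value that meets them.
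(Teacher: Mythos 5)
Your proof is correct and follows essentially the same route as the paper: both rest on the mutual independence of the per-vertex indicators and a single application of Theorem \ref{chernoff}. The only (immaterial) difference is that the paper counts the \emph{bad} vertices $\indic$ with fewer than $r$ neighbours in $C$, shows $\mathbb{E}[\indic]=o(p^{-1})$ via the sharper estimate $\mathbb{P}[\indic_v=1]\leq(1+o(1))(|C|p)^{r-1}e^{-|C|p}$, and applies the upper tail with $\lambda=p^{-1}$, whereas you count the good vertices, need only the cruder bound $\mathbb{P}[Y_v=0]=o(1)$, and put the burden on the lower tail with $\lambda=\sqrt{n/p}$ --- both choices of $\lambda$ deliver the required $\exp(-\Omega(p^{-1}))$.
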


\begin{proof}
Let $\indic_v$ be the indicator random variable that a vertex $v\in [n] \backslash (C \cup W)$ has less than $r$ neighbours in $C$ and set $\indic:=\sum_{v\in [n] \backslash (C \cup W)}\indic_v$.
Since $|C|p=\omega(1)$, we have 
\begin{align*}
\mathbb{P}[\indic_v=1]&=\sum_{j=0}^{r-1}\binom{|C|}{j}p^{j}(1-p)^{|C|-j}
\leq \exp(-|C|p) \sum_{j=0}^{r-1} \frac{(|C|p)^{j}}{j!}
\leq (1+o(1)) (|C|p)^{r-1} \exp(-|C|p).
\end{align*}
Because $|C|p=\omega(1)$, we have
\begin{equation}\label{exppoly}
\exp(|C|p)=\omega((|C|p)^{5(r-1)}).
\end{equation}
Since $|[n]\backslash (C \cup W)|\leq n$ and $|C|p= (np)^{1/(4(r-1))}$, we have
$$\mathbb{E}[\indic]\leq  (1+o(1))n(|C|p)^{r-1} \exp(-|C|p) \stackrel{\eqref{exppoly}}{=}o(n (|C|p)^{-4(r-1)})=o\left(n (np)^{-1}\right)=o(p^{-1}).$$

Note that the set of random variables $\{\indic_v|v\in [n]\backslash (C \cup W)\}$ are mutually independent and therefore, by Theorem \ref{chernoff} we have 
$$\mathbb{P}[\indic\geq p^{-1}]\leq \exp\left(-\Omega(p^{-1})\right).$$
Thus all but at most $p^{-1}$ vertices in $[n]\backslash (C\cup W)$ have at least $r$ neighbours in $C$, which is at least $n-|C|-|W|-p^{-1}=(1+o(1))n$.
\end{proof}

\begin{proof}[Proof of Theorem \ref{mainsup}]
According to Lemma \ref{early}, with probability at least 
$$1-\exp\left(-(1+o(1))\frac{r \tendstoinfty^2}{8( t_0+r\tendstoinfty/3)}\right)$$
 we have that $T>t_1$ and $|\infec(t_1)|\geq t_1+(1-2\hat{\pi}(t_0))\tendstoinfty/4$. 
Note that $T>t_1$ implies $|\checked(t_1)|=t_1$.

Let $A\subset \infec(t_1)\backslash \checked(t_1)$ with $|A|=(1-2\hat{\pi}(t_0))\tendstoinfty/4  \stackrel{\eqref{probt0} }{=}(1+o(1))\tendstoinfty/4$. Lemma \ref{almostinfected1} implies that conditional on $T>t_1$ and $|\infec(t_1)|\geq t_1+(1-2\hat{\pi}(t_0))\tendstoinfty/4$, with probability at least $1-\exp(-\omega(\tendstoinfty^2/t_0))$ there is a set of vertices in $[n]\backslash (\checked(t_1)\cup A)$ of size at least 
$\left(1+\frac{3}{4}\error+\frac{(r-1)\tendstoinfty}{4 t_0}\right)p^{-1},$ 
 where every vertex in the set has at least $r-1$ neighbours in $\checked(t_1)$. Select a subset $\hat{B}$ of these vertices of size exactly $$\left(1+\frac{3}{4}\error+\frac{(r-1)\tendstoinfty}{4 t_0}\right)p^{-1}.$$ 
Note that until this point every event depends only on edges with one end in $\checked(t_1)$.

Since the graph spanned by $\hat{B}$ is a binomial random graph $G((1+3\error/4+(r-1)\tendstoinfty/(4t_0))p^{-1},p)$, Lemma \ref{giant} implies that with probability $1-\exp(-\omega(\tendstoinfty^2/t_0))$ it contains a component of size at least 
$(\error/4+(r-1)\tendstoinfty/(2 t_0+(r-1)\tendstoinfty)))p^{-1}$.
Note that this event depends only on the edges with both ends in $\hat{B}$ and therefore it is independent of the previous events. Let $B'\subset \hat{B}$ be a component of size exactly 
$$\left(\frac{\error}{4}+\frac{(r-1)\tendstoinfty}{2  t_0+(r-1)\tendstoinfty}\right)p^{-1}.$$

Observe that if a vertex in $A$ is connected to a vertex in $B'$, then every vertex in $B'$ will become infected eventually. 
The probability that no vertex in $A$ is connected to any vertex in $B'$ is 
$$(1-p)^{|A| \cdot |B'| }\leq (1-p)^{(1+o(1))|B'|\tendstoinfty/4}\leq \exp\left(-(1+o(1))\frac{(r-1)\tendstoinfty^2}{8t_0+4(r-1)\tendstoinfty}\right).$$
This event depends on edges between $A$ and $B'$ and thus it is independent of the previous events.

Since $|B'|\geq \error p^{-1}/4\geq (np)^{-1/4(r-1)}p^{-1}/4$, there exists a set $B\subset B'$ with $|B|=(np)^{-1/4(r-1)}p^{-1}/4$. Lemma \ref{Chernoff1} implies that the number of vertices in $[n]\backslash (\checked(t_1)\cup A \cup \hat{B})$ which have at least $r$ neighbours in $B$ is at least $(np)^{1/(4(r-1))}p^{-1}$ and let $C$ be a subset of these vertices with exactly $(np)^{1/(4(r-1))}p^{-1}$ vertices. This event depends only on edges between $B$ and $[n]\backslash (\checked(t_1)\cup A \cup \hat{B})$ and thus it is independent of the previous events.

Finally let $D$ be the set of vertices in $[n]\backslash (\checked(t_1)\cup A \cup B \cup C)$ which contain at least $r$ neighbours in $C$. Therefore, by Lemma \ref{Chernoff2} we have that $|D|=(1+o(1))n$ with probability $1-\exp(-\Omega(p^{-1}))$. Similarly as before this event depends only on edges which haven't been considered previously and thus it is independent of the previous events.

Since $D\subset \infec_f$ and $p^{-1}=\omega(t_0)$, 
we have that the probability that almost every vertex becomes infected is at least
$$1-\exp\left(-(1+o(1))\frac{r\tendstoinfty^2}{8(t_0+r\tendstoinfty/3)}\right)-\exp\left(-(1+o(1))\frac{(r-1)\tendstoinfty^2}{8 t_0+4(r-1)\tendstoinfty}\right),$$
completing the proof.
\end{proof}

\section{Discussion}

In a earlier weaker version of this paper \cite{aofa2016} we consider the case when $\error$ is a constant, indeed $\error=r-1$. The proof for that case is simpler but the exponential tail bounds are weaker, in particular the constant in the exponential is weaker by a factor of roughly $r-1$.

\bibliographystyle{plain}
\bibliography{referencegnpexp}

\newpage
\section{Appendix}

Since throughout the appendix we work with martingales, we start with the well-known definition of a martingale.
\begin{definition}
Let $(\Omega,\mathcal{F},P)$ be a probability space and $\emptyset \subseteq \filter{0} \subseteq \ldots \subseteq \filter{k} \subseteq \mathcal{F}$ be a sequence of sub-$\sigma$-algebras. A sequence of random variables $M_0,\ldots,M_k$ forms a martingale with respect to the filtration $\{\filter{i}\}_{0\leq i \leq k}$ if the following three conditions are satisfied:
\begin{enumerate}[label=(M\arabic*)]
	\item \label{measurable} for every $0\leq i \leq k$, the random variable $M_i$ is $\filter{i}$-measurable;
	\item \label{finite_exp} for every $0\leq i \leq k$, $\mathbb{E}[|M_i|]\leq \infty$;
	\item \label{martingale_cond} for every $1\leq i \leq k$, $\mathbb{E}[M_i|\filter{i-1}]=M_{i-1}$.
\end{enumerate}
\end{definition}

\subsection{Proof of Lemma \ref{martingale}}

In order to prove Lemma \ref{martingale} we let $X(0,1),\ldots,X(n,n-a)$ and $M(0,n-a),\ldots,M(n,n-a)$ be random variables defined in Section \ref{sec:setup}. Recall that for $(t,i)\geq (0,n-a)$ we denote the $\sigma$-algebra  generated by $M(0,n-a),\ldots,M(t,i)$  by $\filter{t,i}=\sigma\left(\left\{M(\tau,\iota)\right\}_{(0,n-a)\leq(\tau,\iota)\leq(t,i)}\right)$. Similarly we denote  the $\sigma$-algebra generated by  $X(0,1),\ldots,X(t,i)$ by $\mathcal{G}(t,i)=\sigma\left(\left\{X(\tau,\iota)\right\}_{(0,1)\leq(\tau,\iota)\leq(t,i)}\right)$.

\begin{lemma}\label{filtrationsmatch}
For any $(0,n-a)\leq (t,i)\leq (n,n-a)$,   $\mathcal{G}(t,i)=\filter{t,i}$.
\end{lemma}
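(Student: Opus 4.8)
The plan is to prove both inclusions $\mathcal{G}(t,i)\subseteq\filter{t,i}$ and $\filter{t,i}\subseteq\mathcal{G}(t,i)$ by a simultaneous induction on the round $(t,i)$ in lexicographic order, starting from the base case $(0,n-a)$. For the base case, recall $M(0,n-a)=m_0=0$ is deterministic, so $\filter{0,n-a}=\{\emptyset,\Omega\}$; on the other hand $\mathcal{G}(0,n-a)$ is generated by $X(0,1),\ldots,X(0,n-a)$, but by definition $\checked(0)=\emptyset$, so every $X(0,j)$ is the indicator that vertex $a+j$ has at least $r$ neighbours in the empty set, which is identically $0$. Hence $\mathcal{G}(0,n-a)$ is also trivial and the two $\sigma$-algebras agree. (One should be slightly careful here that the convention $\infec(0,n-a):=\infec(0)$ is consistent with this.)

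For the inductive step, assume $\mathcal{G}((t,i)-1)=\filter{(t,i)-1}$ and pass to round $(t,i)$. The inclusion $\filter{t,i}\subseteq\mathcal{G}(t,i)$ is the easier direction: from \eqref{martingaledef}, $M(t,i)$ is an explicit function of the variables $X(t,1),\ldots,X(t,i)$, $X(t-1,i+1),\ldots,X(t-1,n-a)$ together with $\pi(t)$ and $\pi(t-1)$; since $\pi$ is determined by $T$, and $T$ together with all the relevant $X$'s are $\mathcal{G}(t,i)$-measurable (note $T\le t$ is detected by the pattern of $X$-values in earlier rounds, and for $\tau>T$ one has $X(\tau,\cdot)=X(T,\cdot)$), we get $M(t,i)$ is $\mathcal{G}(t,i)$-measurable, and combined with the inductive hypothesis this gives $\filter{t,i}=\sigma(\filter{(t,i)-1}\cup\sigma(M(t,i)))\subseteq\mathcal{G}(t,i)$. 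The reverse inclusion $\mathcal{G}(t,i)\subseteq\filter{t,i}$ requires recovering $X(t,i)$ from $\filter{t,i}$. Here the key observation is the one-step difference formula \eqref{onestepdiff}: $M(t,i)-M((t,i)-1)=\frac{X(t,i)-\pi(t)}{1-\pi(t)}-\frac{X(t-1,i)-\pi(t-1)}{1-\pi(t-1)}$. By the inductive hypothesis $\mathcal{G}((t,i)-1)=\filter{(t,i)-1}\subseteq\filter{t,i}$, so $X(t-1,i)$, and $\pi(t-1)$, and $\pi(t)$ — the latter because whether $T<t$, $T=t$ or $T>t$ is decidable from the earlier $X$-values, hence from $\filter{(t,i)-1}$, and when $T\ge t$ it equals $\hat\pi(t)$ which is deterministic — are all $\filter{t,i}$-measurable; since $1-\pi(t)>0$, solving the displayed identity for $X(t,i)$ exhibits it as a $\filter{t,i}$-measurable random variable, so $\mathcal{G}(t,i)=\sigma(\mathcal{G}((t,i)-1)\cup\sigma(X(t,i)))\subseteq\filter{t,i}$.

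The main obstacle, and the point deserving the most care, is the handling of $\pi$: it is a genuine random variable depending on the stopping round $T$, so one must argue that both $\pi(t-1)$ and $\pi(t)$ are measurable with respect to the $\sigma$-algebra available \emph{before} round $(t,i)$ — otherwise dividing by $1-\pi(t)$ and rearranging \eqref{onestepdiff} would not be a legitimate measurability argument. The clean way to do this is to first establish, as a preliminary sub-induction, that for each round the events $\{T< t\}$, $\{T=t\}$ are $\mathcal{G}((t,i)-1)$-measurable — using that $T$ is the unique time with $|\infec(T)|=T$, equivalently the first time $\infec(t)\setminus\checked(t)=\emptyset$, a condition readable off the $X$-values of rounds strictly before $(t,i)$ (one has to check $i$ is large enough, or else note $T\le t$ is anyway decided by the end of step $t-1$ together with $X(t,1),\dots,X(t,i-1)$) — and that on $\{T\ge t\}$ one has $\pi(t)=\hat\pi(t)$, a deterministic quantity, while on $\{T<t\}$ one has $\pi(t)=\hat\pi(T)$ with $T$ already revealed. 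Everything else is bookkeeping with the lexicographic order on rounds.
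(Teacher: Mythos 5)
Your proposal is correct and follows essentially the same route as the paper: induction on rounds, reduction to showing $M(t,i)$ is $\mathcal{G}(t,i)$-measurable and $X(t,i)$ is $\filter{t,i}$-measurable via the one-step difference \eqref{onestepdiff}, with the key preliminary step being that $\pi(t)$ (equivalently, the events $\{T=\tau\}$ for $\tau<t$, read off from the earlier $X$-values) is measurable with respect to the $\sigma$-algebra of the preceding round. The paper writes this last point as the explicit identity $\pi(t)=\hat{\pi}(t)(1-\mathds{1}_{T<t})+\hat{\pi}(T\mathds{1}_{T<t})\mathds{1}_{T<t}$, which is exactly the case split you describe.
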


\begin{proof}

The proof is by induction. Since $X(0,1)=\ldots=X(0,n-a)=0$ and $M(0,n-a)=0$ we have that $\filter{0,n-a}=\mathcal{G}(0,n-a)$.

Now assume that $\filter{(t,i)-1}=\mathcal{G}((t,i)-1)$. Since $\filter{(t,i)-1}\subseteq \filter{t,i}$ and $\mathcal{G}((t,i)-1)\subseteq\mathcal{G}(t,i)$, the result follows if we can show that $X(t,i)$ is $\filter{t,i}$-measurable and $M(t,i)$ is $\mathcal{G}(t,i)$-measurable.

First we show that $\pi(t)$ is $\mathcal{G}((t,i)-1)$-measurable, and thus by our induction hypothesis it is also $\filter{(t,i)-1}$-measurable. Recall that
$$|\infec(\tau)|\stackrel{\eqref{infectedset}}{=} a+\sum_{i=1}^{n-a}X(\tau,i)$$
and that $T$ is the only $\tau$ such that $|\infec(\tau)|=\tau$. If $\tau<t$, then the random variables $X(\tau,1),\ldots,X(\tau,n-a)$ are $\mathcal{G}((t,i)-1)$-measurable and thus $\mathds{1}_{T=\tau}$ is as well.
In addition the random variables $\mathds{1}_{T< t}=\sum_{\tau=0}^{t-1} \mathds{1}_{T=\tau}$ and $T \mathds{1}_{T<t}=\sum_{\tau=0}^{t-1}\tau \mathds{1}_{T=\tau}$ are also $\mathcal{G}((t,i)-1)$-measurable.

One can express $\pi(t)=\hat{\pi}(t)(1-\mathds{1}_{T<t})+\hat{\pi}(T \mathds{1}_{T<t})\mathds{1}_{T<t}$ and thus $\pi(t)$ is also $\mathcal{G}((t,i)-1)$-measurable. 

Recall from \eqref{onestepdiff} that
\begin{equation}\label{onestepdiffappendix}
M(t,i)-M((t,i)-1)=\frac{X(t,i)-\pi(t)}{1-\pi(t)}-\frac{X(t-1,i)-\pi(t-1)}{1-\pi(t-1)}.
\end{equation}

The result follows from the fact that every random variable except $M(t,i)$ in \eqref{onestepdiffappendix} is $\mathcal{G}(t,i)$-measurable and every variable except $X(t,i)$ in \eqref{onestepdiffappendix} is $\filter{t,i}$-measurable.
\end{proof}

\begin{proof}[Proof of Lemma \ref{martingale}]
Clearly the conditions \ref{measurable} and \ref{finite_exp} are satisfied, so we only need to show \ref{martingale_cond}. Fix \begin{math}1\leq t\leq n\end{math} and \begin{math}1\leq i \leq n-a\end{math}.

First we shall show that
\begin{align}
\mathbb{E}\left[\left. \frac{X(t,i)-\pi(t)}{1-\pi(t)}\right| \filter{(t,i)-1}\right]
&=\frac{X(t-1,i)-\pi(t-1)}{1-\pi(t-1)}.\label{expectation}
\end{align}

To this end, observe that if $X(t-1,i)=1$, then we have $X(t,i)=1$ and conditional on this event both sides of Equation \eqref{expectation} equal 1. 

Now assume that $X(t-1,i)=0$. When $t>T$, we have $X(t,i)=X(t,i-1)=0$ with probability 1 and by the definition of $\pi(t)$ we have $\pi(t)=\pi(t-1)=\hat{\pi}(T)$. Evaluating both sides of Equation \eqref{expectation} gives us  $-\hat{\pi}(T)/(1-\hat{\pi}(T))$. 

Finally we look at the case when $t\leq T$ and $X(t-1,i)=0$. Since $t\leq T$ we have $\pi(t)=\hat{\pi}(t)$ and thus
\begin{align}
\mathbb{E}&\left[\left.\frac{X(t,i)-\pi(t)}{1-\pi(t)}\right| \filter{(t,i)-1},X(t-1,i)=0,t\leq T\right]\nonumber\nonumber\\
&=\frac{\mathbb{E}[X(t,i)| \filter{(t,i)-1},X(t-1,i)=0,t\leq T]-\hat{\pi}(t)}{1-\hat{\pi}(t)}.\label{eqvcondexp}
\end{align}

Note that $X(t-1,i)=0$ implies $a+i\not\in\infec(t-1)$ and thus $a+i\not\in\checked(t)$. When $t\leq T$, the random variable $X(t,i)$ depends only on edges between $a+i$ and $\checked(t)$ and if $a+i\in V\backslash \checked(t)$ these edges are disjoint from the edges between $j$ and $\checked(t)$ for any $j\neq i$. Therefore, conditional on $t\leq T$ we have that $X(t,i)$ is independent of $X(\tau,\iota)$ for every $(0,n-a)\leq (\tau,\iota)<(t,i)$ such that $\iota\neq i$. This together with Lemma \ref{filtrationsmatch} implies
\begin{align*}
\mathbb{E}\left[X(t,i)| \filter{(t,i)-1},X(t-1,i)=0,t\leq T\right]&=\mathbb{E}\left[X(t,i)| \sigma(\{X(\tau,i)\}_{\tau<t}),X(t-1,i)=0,t\leq T\right].
\end{align*}
Since $X(t-1,i)=0$ implies $X(\tau,i)=0$ for every $\tau<t$, we have
$$\mathbb{E}\left[X(t,i)| \sigma(\{X(\tau,i)\}_{\tau<t}),X(t-1,i)=0,t\leq T\right]=\mathbb{E}[X(t,i)|X(t-1,i)=0,t\leq T].$$
Note that
\begin{align*}
\mathbb{P}\left[X(t,i)=0|X(t-1,i)=0,t\leq T \right]
=\frac{1-\hat{\pi}(t)}{1-\hat{\pi}(t-1)}
=1-\frac{\hat{\pi}(t)-\hat{\pi}(t-1)}{1-\hat{\pi}(t-1)}.
\end{align*}
Therefore we obtain
\begin{equation}\label{condexpX}
\mathbb{E}\left[X(t,i)| X(t-1,i)=0,t\leq T\right]=\frac{\hat{\pi}(t)-\hat{\pi}(t-1)}{1-\hat{\pi}(t-1)},
\end{equation}
which together with \eqref{eqvcondexp} gives
\begin{align*}
\mathbb{E}\left[\left.\frac{X(t,i)-\pi(t)}{1-\pi(t)}\right| \filter{(t,i)-1},X(t-1,i)=0,t\leq T\right]&=-\frac{\hat{\pi}(t)}{1-\hat{\pi}(t)}\cdot \frac{1-\hat{\pi}(t)}{1-\hat{\pi}(t-1)}+1\cdot \frac{\hat{\pi}(t)-\hat{\pi}(t-1)}{1-\hat{\pi}(t-1)}\\
&=-\frac{\hat{\pi}(t-1)}{1-\hat{\pi}(t-1)},
\end{align*}
implying \eqref{expectation}.
Finally  using \eqref{onestepdiffappendix} and  \eqref{expectation} we have  
\begin{align*}
\mathbb{E}&[M(t,i)-M((t,i)-1)|\filter{(t,i)-1}]\\
&\stackrel{\eqref{onestepdiffappendix}}{=}\mathbb{E}\left[\left.\frac{X(t,i)-\pi(t)}{1-\pi(t)}\right| \filter{(t,i)-1}\right]-\frac{X(t-1)-\pi(t-1)}{1-\pi(t-1)}
\stackrel{\eqref{expectation}}{=}0.
\end{align*}
\end{proof}

\subsection{Proof of Theorem \ref{marconc}}

We  need the following well-known results from conditional expectation \cite{MR1155402}.
Let $X,Y$ be random variables with $\mathbb{E}[|X|],\mathbb{E}[|Y|]<\infty$ and let $\mathcal{G}$ be a sub-$\sigma$-algebra of $\mathcal{F}$. Then the following holds.
\begin{enumerate}[label=(C\arabic*)]
	\item \label{exp_of_exp} $\mathbb{E}[\mathbb{E}[X|\mathcal{G}]]=\mathbb{E}[X]$;
	\item \label{linearity} $\mathbb{E}[aX+bY|\mathcal{G}]=a\mathbb{E}[X|\mathcal{G}]+b\mathbb{E}[Y|{\mathcal{G}}]$ for $a,b\in\mathbb{R}$;
	\item \label{known} If $Y$ is $\mathcal{G}$-measurable, then $\mathbb{E}[Y|\mathcal{G}]=Y$;
	\item \label{known_part} If $X,Y>0$, $\mathbb{E}[XY]<\infty$, and $Y$ is $\mathcal{G}$-measurable, then $\mathbb{E}[XY|\mathcal{G}]=Y\mathbb{E}[X|\mathcal{G}]$.
\end{enumerate}

Note that if $\mathcal{G}=\emptyset$, then $\mathbb{E}[X|\mathcal{G}]=\mathbb{E}[X]$. We also need the following result of McDiarmid \cite{MR1678578}.

\begin{lemma}\label{gprop}
Let $g(x)=2\sum_{\ell=2}^{\infty}x^{\ell-2}/\ell!$. Then $g(x)$ is monotone increasing for $x\in \mathbb R_+$ and in addition $g(x)< (1-x/3)^{-1}$ for $x<3$.
\end{lemma}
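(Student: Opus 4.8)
The plan is to work entirely with the power-series representation and avoid any calculus. First I would shift the summation index, writing $k=\ell-2$, so that
$$g(x)=\sum_{k=0}^{\infty}\frac{2x^{k}}{(k+2)!},$$
a power series with non-negative coefficients that converges for every real $x$ (equivalently $g(x)=2(e^{x}-1-x)/x^{2}$ for $x\neq 0$ and $g(0)=1$). Monotonicity on $\mathbb{R}_{+}$ is then immediate: for $0\le x<y$ every term satisfies $2x^{k}/(k+2)!\le 2y^{k}/(k+2)!$, and the $k=1$ term is strictly larger at $y$, so $g(x)<g(y)$.

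For the bound $g(x)<(1-x/3)^{-1}$ on $0<x<3$, I would compare the two series coefficient by coefficient. Since $|x/3|<1$ in this range, the geometric series gives $(1-x/3)^{-1}=\sum_{k\ge 0}x^{k}/3^{k}$, so it suffices to show that $2/(k+2)!\le 1/3^{k}$ for every $k\ge 0$, with strict inequality for at least one $k$. A direct check yields equality at $k=0$ and $k=1$ (both sides equal $1$ and $1/3$ respectively), and for $k\ge 2$ the inequality $2\cdot 3^{k}<(k+2)!$ follows by a one-line induction: the base case $k=2$ reads $18<24$, and the inductive step multiplies the left-hand side by $3$ and the right-hand side by $k+3\ge 5$. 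Multiplying the $k$-th inequality by $x^{k}>0$ and summing, the strictness of the $k=2$ term (together with $x^{2}>0$) makes the overall inequality strict; at the endpoint $x=0$ both sides equal $1$.

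I expect there to be essentially no obstacle. The only points requiring minor care are the strictness of the coefficient comparison — one must exhibit a coefficient that is strictly smaller and is attached to a positive power of $x$, which the $k=2$ term provides — and the fact that the geometric expansion of $(1-x/3)^{-1}$ is valid precisely on the stated interval $x<3$. This term-by-term argument cleanly replaces the alternative of manipulating $e^{x}$ directly, which would be somewhat messier.
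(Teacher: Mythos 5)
Your proof is correct, and there is nothing in the paper to compare it against: the paper states this lemma as a quoted result of McDiarmid and gives no proof, so your term-by-term argument serves as a self-contained replacement for the reference. The key step --- comparing the coefficient $2/(k+2)!$ of $g$ with the coefficient $3^{-k}$ of the geometric series, i.e.\ proving $2\cdot 3^k\le (k+2)!$ with equality exactly at $k=0,1$ and a one-line induction for $k\ge 2$ --- is sound, and the monotonicity claim is immediate from the non-negative coefficients as you say. Two minor remarks, neither a gap in substance. First, as you honestly note, at $x=0$ both sides equal $1$, so the strict inequality in the lemma as literally stated ("for $x<3$") fails at $x=0$; this is a blemish of the statement rather than of your proof, and it is harmless because the lemma is invoked in the proof of Theorem \ref{marconc} only in the non-strict form $g(tm)\le(1-tm/3)^{-1}$ with $tm>0$. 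Second, the hypothesis "$x<3$" formally also covers negative $x$, where your geometric expansion requires $x>-3$ and a signed term-by-term comparison would need extra care; again, only $x=tm>0$ is ever used, so your implicit restriction to $0\le x<3$ costs nothing.
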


\begin{proof}[Proof of Theorem ~\ref{marconc}]
Note that for every $1\leq i \leq k$, if $\sigma_i^2=0$, then  $\mathrm{Var}[M_i|\filter{i-1}]=0$. Therefore, if $\sigma_i^2=0$ for every $1\leq i \leq k$, then we have  $M_k=M_{k-1}=M_0=m_0$. So for the remainder of the proof we may assume that $\sum_{i=1}^{k}\sigma_i^2>0$.

For $1\leq i \leq k$ and $t>0$ we have
\begin{equation}\label{expansion}
\mathbb{E}\left[e^{t(M_i-M_{i-1})}|\filter{i-1}\right]=\mathbb{E}\left[\sum_{\ell=0}^{\infty}\frac{t^{\ell}}{\ell!}(M_i-M_{i-1})^{\ell}|\filter{i-1}\right].
\end{equation}
Note that
$$\mathbb{E}[M_i-M_{i-1}|\filter{i-1}]\stackrel{\ref{linearity}}{=}\mathbb{E}[M_i|\filter{i-1}]-\mathbb{E}[M_{i-1}|\filter{i-1}]\stackrel{\ref{known},\ref{measurable}}{=}\mathbb{E}[M_i|\filter{i-1}]-M_{i-1}\stackrel{\ref{martingale_cond}}{=}0.$$
This together with \eqref{expansion} and \ref{linearity} implies
\begin{align*}
\mathbb{E}\left[e^{t(M_i-M_{i-1})}|\filter{i-1}\right]&=1+\mathbb{E}\left[\sum_{\ell=2}^{\infty}\frac{t^{\ell}}{\ell!}(M_i-M_{i-1})^{\ell}|\filter{i-1}\right]\\
&=1+\mathbb{E}\left[\frac{t^{2}}{2}(M_i-M_{i-1})^{2}g(t(M_i-M_{i-1}))|\filter{i-1}\right].
\end{align*}
By the condition $\ref{bounded_diff}$ of Theorem ~\ref{marconc} we have $M_i-M_{i-1}\leq m$ and by Lemma \ref{gprop} the function $g$ is monotone increasing and thus $g(t(M_i-M_{i-1}))\leq g(tm)$, leading to
\begin{align}
\mathbb{E}\left[e^{t(M_i-M_{i-1})}|\filter{i-1}\right]&\leq1+\mathbb{E}\left[\frac{t^{2}}{2}(M_i-M_{i-1})^{2}g(tm)|\filter{i-1}\right]\nonumber \\
&\stackrel{\ref{linearity}}{=}1+\frac{t^{2}}{2}g(tm)\mathbb{E}\left[(M_i-M_{i-1})^{2}|\filter{i-1}\right]\nonumber \\
&\stackrel{\ref{bounded_var}}{\leq}1+\frac{t^{2}}{2}g(tm)\sigma_i^2 \nonumber \\
&\leq \exp\left(\frac{t^{2}}{2}g(tm)\sigma_i^2\right). \label{exp_exponential_diff}
\end{align}

Clearly
$$\mathbb{E}\left[e^{tM_i}|\filter{i-1}\right]=\mathbb{E}\left[e^{t(M_i-M_{i-1})}e^{tM_{i-1}}|\filter{i-1}\right].$$
Note that both $e^{t(M_i-M_{i-1})}>0$ and $e^{tM_{i-1}}>0$. In addition $\ref{bounded_diff}$ implies $M_i\leq m_0+im$ for any $0\leq i \leq k$. Therefore $e^{tM_i}$ is also bounded from above and thus $\mathbb{E}[e^{tM_i}]<\infty$. By \ref{measurable} and \ref{known_part} we have
\begin{equation}\label{exp_exponential}
\mathbb{E}\left[e^{tM_i}|\filter{i-1}\right]=e^{tM_{i-1}}\mathbb{E}\left[e^{t(M_i-M_{i-1})}|\filter{i-1}\right]\stackrel{\eqref{exp_exponential_diff}}{\leq} e^{tM_{i-1}}\exp\left(\frac{t^{2}}{2}g(tm)\sigma_i^2 \right).
\end{equation}
Now we will show by induction that for each $0\leq i \leq k$
$$\mathbb{E}\left[e^{tM_i}\right]\leq \exp\left(tm_0+\frac{t^{2}}{2}g(tm)\sum_{j=1}^i\sigma_j^2 \right).$$
Observe that this holds for $i=0$. Assume that it holds for $i-1$:
\begin{equation}\label{indhyp}
\mathbb{E}[e^{tM_{i-1}}]\leq \exp\left(tm_0+\frac{t^{2}}{2}g(tm)\sum_{j=1}^{i-1}\sigma_j^2 \right).
\end{equation}
Then we have
\begin{align*}
\mathbb{E}\left[e^{tM_{i}}\right]&\stackrel{\ref{exp_of_exp}}{=}\mathbb{E}\left[\mathbb{E}\left[e^{tM_{i}}|\filter{i-1}\right]\right]\\
&\stackrel{\eqref{exp_exponential}}{\leq} \mathbb{E}\left[e^{tM_{i-1}}\exp\left(\frac{t^{2}}{2}g(tm)\sigma_i^2 \right)\right]\\
&\stackrel{\ref{linearity}}{=}\exp\left(\frac{t^{2}}{2}g(tm)\sigma_i^2 \right) \mathbb{E}[e^{tM_{i-1}}]\\
&\stackrel{\eqref{indhyp}}{\leq}\exp\left(tm_0+\frac{t^{2}}{2}g(tm)\sum_{j=1}^i\sigma_j^2 \right).
\end{align*}
In particular, we have
\begin{equation}\label{exp_final}
\mathbb{E}[e^{t(M_k-m_0)}]\leq \exp\left(\frac{t^{2}}{2}g(tm)\sum_{i=1}^k \sigma_i^2 \right).
\end{equation}
By Markov's inequality we have that for any $\lambda,t>0$
\begin{align}
\mathbb{P}[M_k-m_0\geq \lambda]&=\mathbb{P}[e^{t(M_k-m_0)}\geq e^{t\lambda}]\nonumber\\
&\leq e^{-t\lambda}\mathbb{E}[e^{t(M_k-m_0)}]\nonumber\\
&\stackrel{\eqref{exp_final}}{\leq} \exp\left(-t\lambda+\frac{t^{2}}{2}g(tm)\sum_{i=1}^k \sigma_i^2\right). \label{expbound}
\end{align}
Set
\begin{equation}\label{paramchoice}
t=\frac{\lambda}{\sum_{i=1}^k \sigma_i^2+m\lambda/3}.
\end{equation}
Recall that $\sum_{i=1}^{k}\sigma_i^2>0$ and thus
$$tm=\frac{m\lambda}{\sum_{i=1}^k \sigma_i^2+m\lambda/3}<\frac{m\lambda}{m\lambda/3}=3.$$
Therefore by Lemma \ref{gprop} we have $g(tm)\leq (1-tm/3)^{-1}$ and 
\begin{align*}
\mathbb{P}[M_k-m_0\geq \lambda]&\stackrel{\eqref{expbound}}{\leq} \exp\left(\frac{t^{2}}{2}\frac{1}{1-tm/3}\sum_{j=1}^k\sigma_j^2-t\lambda\right)\\
&\stackrel{\eqref{paramchoice}}{=}\exp\left(-\frac{\lambda^2}{2\left(\sum_{i=1}^k \sigma_i^2+m\lambda/3\right)}\right).
\end{align*}

\end{proof}

\end{document}